\newcommand{\g}{\mathfrak{g}}
\newcommand{\h}{\mathfrak{h}}
\newcommand{\mfk}{\mathfrak{k}}
\newcommand{\mfp}{\mathfrak{p}}
\newcommand{\mft}{\mathfrak{t}}
\newcommand{\mfb}{\mathfrak{b}}
\newcommand{\mfa}{\mathfrak{a}}
\newcommand{\mfn}{\mathfrak{n}}
\newcommand{\bbZ}{\mathbb{Z}}
\newcommand{\bbC}{\mathbb{C}}
\newcommand{\cC}{\mathcal{C}}
\newcommand{\cS}{\mathcal{S}}
\DeclareMathOperator{\sgn}{\mathrm{sgn}}
\newtheorem{theorem}{Theorem}[section]
\newtheorem{proposition}[theorem]{Proposition}
\newtheorem{lemma}[theorem]{Lemma}
\theoremstyle{definition}
\newtheorem{definition}[theorem]{Definition}
\newtheorem{notation}[theorem]{Notation}
\theoremstyle{remark}
\subjclass{Primary 22E50, Secondary 05E10}
\title[Signed and Classical Kazhdan-Lusztig Polynomials]
{Relating Signed Kazhdan-Lusztig Polynomials and Classical 
Kazhdan-Lusztig Polynomials}
\author{Wai~Ling~Yee}
\address{Department of Mathematics and Statistics \\ 
University of Windsor \\
Windsor, Ontario \\
CANADA}
\email{wlyee@uwindsor.ca}
\thanks{The author is grateful for the support from a Discovery 
Grant and UFA from NSERC, and NSF grant DMS-0554278.}
\dedicatory{This paper is dedicated to the memory of Amir Hussain.}
\begin{document}

\begin{abstract}
Motivated by studying the Unitary Dual Problem, a variation of 
Kazhdan-Lusztig polynomials was defined in \cite{Y} which encodes signature 
information at each level of the Jantzen filtration.  These so called 
signed Kazhdan-Lusztig polynomials may be used to compute the signatures of 
invariant Hermitian forms on irreducible highest weight modules.
The key result of this paper is a simple relationship between signed 
Kazhdan-Lusztig polynomials and classical Kazhdan-Lusztig polynomials:
signed Kahzdan-Lusztig polynomials are shown to equal classical Kazhdan-Lusztig 
polynomials evaluated at $-q$ rather than $q$ and multiplied by a sign.  This 
result has applications to finding the unitary dual for real reductive Lie 
groups since Harish-Chandra modules may be constructed by applying Zuckerman 
functors to highest weight modules.
\end{abstract}

\maketitle

\section{Introduction}
Classifying irreducible unitary representations of a group, known as 
the Unitary Dual Problem, is an open problem that is important for its wide
ranging applications.  In particular, it is a necessary component of a
programme in abstract harmonic analysis articulated by I.M. Gelfand in the 
1930s for solving difficult problems in disparate areas of mathematics.  
Gelfand's general philosophy is to formulate the solution as the solution to a 
corresponding algebraic problem which, in turn, may be solved by decomposition 
into simpler (though possibly infinitely many) problems.

The most general approach towards solving the Unitary Dual Problem for real 
reductive Lie groups has been to first identify a broader set of 
representations: the Hermitian representations, which accept invariant 
Hermitian forms.  By computing the signatures of these invariant Hermitian 
forms and then identifying when the forms are definite, one obtains a 
classification of the unitary representations.  The cases for which the 
Unitary Dual Problem is solved are limited.

In \cite{Y}, signed Kazhdan-Lusztig polynomials for semisimple Lie algebras 
were introduced and used to give formulas for signature characters of invariant 
Hermitian forms on irreducible highest weight modules.  While unitary highest 
weight modules have been identified by the work of Enright-Howe-Wallach, 
understanding signatures of all Hermitian representations is important since 
Harish-Chandra modules may be constructed by applying Zuckerman or Bernstein 
functors to highest weight modules.  Identifying the irreducible unitary 
represenations of a real reductive Lie group is equivalent to classifying 
irreducible Harish-Chandra modules.  While the Zuckerman functor is known to 
preserve unitarity in certain circumstances (\cite{V},\cite{W}), it does not 
preserve unitarity in general, hence the need to understand signatures of 
all Hermitian highest weight modules.  This paper and \cite{Y2} provide 
dramatic simplifications to the formulas in \cite{Y} for signed 
Kazhdan-Lusztig polynomials and signatures of invariant Hermitian forms on 
irreducible highest weight modules in the equal rank case.  Amazingly, 
signed Kazhdan-Lusztig polynomials are equal to classical Kazhdan-Lusztig 
polynomials evaluated at $-q$ up to a sign.  Specifically:

{\bf Main Theorem:}
Let $\g_0$ be a real equal rank semisimple Lie algebra with complexification 
$\g$, $\theta$ a Cartan involution of $\g_0$, and let $\h_0$ be a 
$\theta$-stable Cartan subalgebra with complexification $\h$.  Let $\lambda  
\in \h^*$ be antidominant and let $x$ be in the integral Weyl group of 
$\lambda$ such that the Verma module $M(x \lambda )$ admits an invariant 
Hermitian form (more details within the paper).  
Then:
$$P_{x,y}^{\lambda,w_0}(q) =  (-1)^{\epsilon( x \lambda - y \lambda )}
P_{x,y}(-q)$$
where $\epsilon$ is the $\bbZ_2$-grading on the imaginary root lattice.  (That 
is, $\epsilon( \mu )$ is the parity of the number of non-compact roots in an 
expression for $\mu$ as an integral linear combination of roots.)  The 
polynomial on the left hand side is a signed Kazhdan-Lusztig polynomial while 
the polynomial on the right hand side is a classical Kazhdan-Lusztig 
polynomial.

The format of the paper is as follows.

Sections 2 and 3 contain a synopsis of signature character theory for Verma 
modules and for irreducible highest weight modules.

In section 4, we simplify the formulas for the signs 
$\varepsilon$ which appear in the formulas in Sections 2 and 3.

Section 5 contains the proof of the main theorem.

In Section 6, we discuss upcoming work.

\section{Signature Characters for Invariant Hermitian Forms on Irreducible 
Verma Modules}
In this section, we will restrict our attention to the equal rank case although 
more general formulas appeared in \cite{Y3}.  This streamlines the exposition 
as it eliminates the additional complications which arise in the non-equal 
rank case.
\begin{notation}
We use the following notation in this section:
\begin{itemize}
 \item[-] $\g_0$ is a real equal rank semisimple Lie algebra
 \item[-] $\theta$ is a Cartan involution on $\g_0$ inducing the decomposition 
$\g_0 = \mfk_0 \oplus \mfp_0$
 \item[-] $\h_0 = \mft_0 \oplus \mfa_0$ is the Cartan decomposition of a 
$\theta$-stable Cartan subalgebra
 \item[-] omitting the subscript $0$ indicates complexification
 \item[-] $\mfb = \h \oplus \mfn$ is a Borel subalgebra giving positive roots
	  $\Delta^+( \g, \h)$ and $\g = \mfn \oplus \h \oplus \mfn^-$ is the 
	  corresponding triangular decomposition
 \item[-] $\Lambda_r$ is the root lattice
 \item[-] $\rho$ is one half the sum of the positive roots
 \item[-] $\alpha_1, \ldots, \alpha_n$ are the simple roots
	  and $s_1, \ldots, s_n$ the corresponding simple reflections
 \item[-] $W$ is the Weyl group and $\mathfrak{C}_0$, the fundamental 
 	  chamber, is chosen to be antidominant
 \item[-] $\lambda \in \h^*$
 \item[-] $M(\lambda ) = U(\g) \otimes_{U(\mfb)} \bbC_{\lambda-\rho}$ is 
	  the Verma module of highest weight $\lambda - \rho$ with canonical 
	  generator $v_{\lambda-\rho}$
 \item[-] $\bar{\cdot}$ applied to elements of $\g$ and $\h^*$ denotes complex 
	  conjugation relative to the real form $\g_0$
 \item[-] $H_{\alpha,N}$ denotes the affine hyperplane $H_{\alpha,N} = 
	\{ \lambda \in \h^* : (\lambda,\alpha^\vee) = n \}$
	  where $\alpha \in \Delta(\g,\h)$ and $n \in \bbZ$.
 \item[-] $W_a$ is the affine Weyl group with fundamental (antidominant) 
	  alcove $A_0$
\end{itemize}
\end{notation}

\begin{definition}
An invariant Hermitian form on a representation $V$ is a sesquilinear pairing 
$\left< \cdot, \cdot \right> :  V \times V \to \bbC$
such that
$$\left< X \cdot v, w \right> + \left< v, \bar{X}\cdot w \right> = 0$$
for every $v, w \in V$ and $X \in \g$.
\end{definition}

\begin{proposition} (\cite{Y3}, p. 641)
The Verma module $M( \lambda )$ admits a non-trivial invariant Hermitian 
form if and only if $\theta(\mfb) = \mfb$ and $\lambda-\rho$ is imaginary.
\end{proposition}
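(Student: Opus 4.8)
The plan is to present invariant Hermitian forms on $M(\lambda)$ as $\g$-module maps into the Hermitian dual and to decide when such a map can be nonzero. Write $\sigma$ for the conjugation $X\mapsto\overline{X}$ of $\g$ relative to $\g_0$. Since $\ad$ of $\mft_0\subseteq\mfk_0$ has purely imaginary eigenvalues and $\ad$ of $\mfa_0\subseteq\mfp_0$ has real eigenvalues, complex conjugation on $\h^*$ coincides with $-\theta$ on the real span of the roots; hence $\sigma(\g_\alpha)=\g_{\overline{\alpha}}$ with $\overline{\alpha}=-\theta\alpha$, so $\sigma$ fixes $\h$, permutes the root spaces, and $\theta(\mfb)=\mfb$ (equivalently $\theta(\Delta^+)=\Delta^+$) is equivalent to $\sigma(\mfn)=\mfn^-$. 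Let $M(\lambda)^\star$ be the Hermitian dual: the weight-graded dual $\bigoplus_\mu(M(\lambda)_\mu)^*$ equipped with conjugated scalars and the $\g$-action $(X\cdot f)(v)=-f(\sigma(X)v)$; this is a genuine $\g$-module with $(M(\lambda)^\star)_\tau\cong(M(\lambda)_{-\overline{\tau}})^*$, and invariant Hermitian forms on $M(\lambda)$ are exactly the self-adjoint $\g$-maps $v\mapsto\langle\,\cdot\,,v\rangle$ from $M(\lambda)$ to $M(\lambda)^\star$. Since a $\g$-map out of the Verma $M(\lambda)$ is determined by the image of $v_{\lambda-\rho}$, which must be an $\mfn$-highest weight vector of weight $\lambda-\rho$ in the target, the whole question reduces to: when does $M(\lambda)^\star$ contain a nonzero such vector?

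The converse direction is the cleaner one. Assume $\theta(\mfb)=\mfb$ and $\lambda-\rho$ is imaginary. Then $\sigma$ interchanges $\mfn$ and $\mfn^-$ and fixes $\h$, so $\Psi:=-\sigma$ is an anti-linear anti-automorphism of $U(\g)$ of Chevalley type with $\Psi^2=\mathrm{id}$; in particular it preserves the triangular splitting $U(\g)=U(\h)\oplus\bigl(\mfn^-U(\g)+U(\g)\mfn\bigr)$ and therefore commutes with the Harish-Chandra projection $\pi_{\h}$ onto $U(\h)$. Using that $M(\lambda)=U(\mfn^-)v_{\lambda-\rho}$ is free over $U(\mfn^-)$, define $\langle u\,v_{\lambda-\rho},u'\,v_{\lambda-\rho}\rangle:=(\lambda-\rho)\bigl(\pi_{\h}(\Psi(u')u)\bigr)$ for $u,u'\in U(\mfn^-)$, a $\sigma$-twisted Shapovalov form. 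This is well defined and nonzero (value $1$ on the generator); it is invariant because $\Psi(\sigma(X))=-X$ converts $X$ acting on the left factor into $-\overline{X}$ acting on the right; and it is Hermitian because $\Psi(\Psi(u')u)=\Psi(u)u'$, $\pi_{\h}\circ\Psi=\Psi\circ\pi_{\h}$, and $(\lambda-\rho)\circ\Psi=\overline{(\lambda-\rho)(\,\cdot\,)}$ on $U(\h)$ — this last identity being exactly where "$\lambda-\rho$ imaginary" enters (it fails without that hypothesis).

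For the forward direction, given a nontrivial invariant Hermitian form, its radical is a proper submodule, hence contained in the unique maximal submodule $N(\lambda)\subseteq\bigoplus_{\nu<\lambda-\rho}M(\lambda)_\nu$; thus $v_{\lambda-\rho}$ is not in the radical and $\phi:=\langle\,\cdot\,,v_{\lambda-\rho}\rangle$ is a nonzero $\mfn$-highest weight vector of weight $\lambda-\rho$ in $M(\lambda)^\star$. Testing the invariance identity against $H\in\h$ for weight vectors gives $(\mu+\overline{\nu})(H)\langle v,w\rangle=0$, so $M(\lambda)_\mu\perp M(\lambda)_\nu$ unless $\mu=-\overline{\nu}$; as $v_{\lambda-\rho}$ cannot be orthogonal to all of $M(\lambda)$, this forces $-\overline{\lambda-\rho}=\lambda-\rho$, i.e. $\lambda-\rho$ imaginary, and then $\langle v_{\lambda-\rho},v_{\lambda-\rho}\rangle\neq 0$. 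It remains to extract $\theta(\mfb)=\mfb$. Since $\phi$ is $\mfn$-highest, $v_{\lambda-\rho}\perp\overline{\mfn}\,M(\lambda)$ (indeed $\langle\overline{X}m,v_{\lambda-\rho}\rangle=-\langle m,Xv_{\lambda-\rho}\rangle=0$ for $X\in\mfn$). If $\theta(\mfb)\neq\mfb$, pick a simple root $\alpha_i$ with $\theta\alpha_i\in\Delta^-$ and set $\gamma:=\overline{\alpha_i}=-\theta\alpha_i\in\Delta^+$; with $X_{\pm\gamma}\in\g_{\pm\gamma}$ chosen so that $[\overline{X_{\alpha_i}},X_{-\gamma}]$ is a nonzero multiple of $\gamma^\vee$, the identity $\overline{X_{\alpha_i}}\cdot X_{-\gamma}v_{\lambda-\rho}=[\overline{X_{\alpha_i}},X_{-\gamma}]v_{\lambda-\rho}$ (the other summand dies since $\overline{X_{\alpha_i}}\in\mfn$ kills $v_{\lambda-\rho}$), paired with $v_{\lambda-\rho}$, gives $(\lambda-\rho,\gamma^\vee)=0$, hence $(\lambda-\rho,\alpha_i^\vee)=0$. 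Iterating the pairing against deeper monomials $\overline{X_{\alpha_i}}\cdot X_{-\delta_1}\cdots X_{-\delta_k}v_{\lambda-\rho}$ and exploiting the $-\overline{(\,\cdot\,)}$-symmetry of the character of the image $U(\mfn^-)\phi$ forces $\lambda-\rho$ orthogonal to all simple coroots supporting such a $\gamma$ and reduces the situation to a Verma over the associated Levi, on which $\theta$ does preserve the Borel; the resulting collapse contradicts nontriviality of the form, so $\theta(\mfb)=\mfb$. This last collapse — ruling out the degenerate configurations in which $\lambda-\rho$ is singular enough for the $\mfn$-highest vector to survive in $M(\lambda)^\star$ even when $\theta$ moves $\mfb$ — is the step I expect to require the most care. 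Assembling the two directions proves the proposition.
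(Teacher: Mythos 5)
The paper does not actually prove this proposition --- it is quoted from \cite{Y3} --- so there is no in-paper argument to compare against; I am reviewing your proposal on its own terms. Your ``if'' direction (forms as self-adjoint maps to the Hermitian dual, plus the $\sigma$-twisted Shapovalov form $(\lambda-\rho)(\pi_{\h}(\Psi(u')u))$) is the standard construction and is essentially correct.

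The ``only if'' direction has a genuine gap at the step ``as $v_{\lambda-\rho}$ cannot be orthogonal to all of $M(\lambda)$, this forces $-\overline{\lambda-\rho}=\lambda-\rho$.'' Weight orthogonality only forces $-\overline{\lambda-\rho}$ to be a \emph{weight} of $M(\lambda)$, i.e.\ $(\lambda-\rho)+\overline{\lambda-\rho}$ to be a nonnegative integer combination of positive roots; it does not force it to vanish, and the subsequent claim $\langle v_{\lambda-\rho},v_{\lambda-\rho}\rangle\neq 0$ does not follow. Indeed the implication is false for merely \emph{non-trivial} forms: take $\g_0=\mathfrak{sl}(2,\bbR)$ with the split Cartan (so $\theta\mfb\neq\mfb$ and $\bar E=E$, $\bar F=F$) and $\lambda-\rho=\alpha/2$; the form on $M(\lambda)$ with $\langle v_0,Fv_0\rangle=i$, $\langle Fv_0,v_0\rangle=-i$ and zero between all other pairs of weight spaces is a non-trivial invariant Hermitian form, yet $\lambda-\rho$ is real and $\theta\mfb\neq\mfb$. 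So your forward direction cannot be completed without importing an extra hypothesis: either non-degeneracy of the form (which is what the rest of the paper actually uses, and which rules out the example because non-degeneracy forces the whole weight set to be stable under $\mu\mapsto-\bar\mu$), or the equal-rank hypothesis in force in this paper. In the equal-rank case $\theta$ fixes $\h$ pointwise, hence fixes every root space and $\theta(\mfb)=\mfb$ automatically, and the weight argument does close: $(\lambda-\rho)+\overline{\lambda-\rho}$ takes real values on $\mft_0$ while every nonzero element of the positive root cone takes nonreal values there, so membership in the root lattice forces it to be zero. Relatedly, your extraction of $\theta(\mfb)=\mfb$ by ``iterating the pairing against deeper monomials'' and a ``collapse'' to a Levi is, as you yourself flag, not yet an argument; in the equal-rank setting it is unnecessary, and in the general setting it must be replaced by an argument that genuinely uses non-degeneracy.
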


Such a form is unique up to a real scalar.  If $\left< v_{\lambda-\rho},
v_{\lambda-\rho} \right> = 1$, then the form is called the Shapovalov form 
and is denoted by $\left< \cdot, \cdot \right>_{\lambda}$.  Henceforth, for 
the remainder of this paper, we fix $\mfb$ and $\lambda$ to satisfy these 
conditions.

Because of invariance, the weight space decomposition of the Verma module is 
an orthogonal decomposition, making the notion of the signature of the 
Shapovalov form reasonable although the Verma module is infinite-dimensional.  
Thus:
\begin{definition}
If the Shapovalov form $\left< \cdot, \cdot \right>_\lambda$ is 
non-degenerate and has signature 
$(p(\mu), q(\mu))$ on the $\lambda-\mu-\rho$ weight space,  the signature 
character of the Shapovalov form on $M(\lambda)$ is:
$$ch_s M(\lambda) = \sum_{\mu \in \Lambda_r^+}( p(\mu) - q(\mu)) e^{\lambda
- \mu-\rho}.$$
\end{definition}

The radical of the Shapovalov form is precisely the unique maximal proper 
submodule of $M(\lambda)$.  The Shapovalov determinant formula states the 
following:

\begin{proposition}
The determinant of a matrix representing the Shapovalov form on the $\lambda
-\rho - \mu$ weight space, up to a constant, is
$$\prod_{\alpha \in \Delta^+(\g,\h)} \prod_{n=1}^\infty 
((\lambda, \alpha^\vee) - n)^{P(\mu - n \alpha)}$$
where $P$ denotes the Kostant partition function.
\end{proposition}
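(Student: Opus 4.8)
The plan is to follow the standard strategy going back to Shapovalov and Kac--Kazhdan: exhibit the determinant as a polynomial in $\lambda$, bound its degree from above, produce enough linear factors to saturate that bound, and finish by comparing degrees. Throughout I use the PBW-type basis of the weight space $M(\lambda)_{\lambda-\rho-\mu}$ consisting of the vectors $f_I v_{\lambda-\rho}$, where $I$ ranges over the Kostant partitions of $\mu$ (multisets of positive roots with sum $\mu$), $f_I$ denotes the associated ordered monomial in negative root vectors, and $\ell(I)$ is the number of parts of $I$; there are $P(\mu)$ such vectors. By moving root vectors across the form using invariance and then reducing products to PBW order with respect to $\g=\mfn^-\oplus\h\oplus\mfn$, one checks that each Gram matrix entry $\left< f_I v_{\lambda-\rho}, f_J v_{\lambda-\rho}\right>_\lambda$ is a polynomial in $\lambda$ of degree at most $\min(\ell(I),\ell(J))$, this bound being the largest number of Cartan contractions that can occur. (The hypotheses $\theta\mfb=\mfb$ and $\lambda-\rho$ imaginary are precisely what identify the invariant Hermitian form with the ordinary contravariant Shapovalov form, so this is the classical computation.) Hence $\det$ is a polynomial in $\lambda$ of degree at most $\sum_I\ell(I)$.

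Next I would establish the divisibility. Fix a positive root $\alpha$ and an integer $n\geq1$, and restrict attention to the hyperplane $H_{\alpha,n}$. By the Verma--Bernstein--Gelfand--Gelfand/Kac--Kazhdan theorem on singular vectors, whenever $(\lambda,\alpha^\vee)=n$ the module $M(\lambda)$ contains a nonzero singular vector of weight $\lambda-\rho-n\alpha$, hence a homomorphic image of $M(\lambda-n\alpha)$; being a proper submodule, this image lies in the radical of the Shapovalov form. Its intersection with $M(\lambda)_{\lambda-\rho-\mu}$ has dimension $P(\mu-n\alpha)$, so the Gram matrix has corank at least $P(\mu-n\alpha)$ at every point of $H_{\alpha,n}$, and therefore $\det$ vanishes to order at least $P(\mu-n\alpha)$ along $H_{\alpha,n}$. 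The linear forms $(\lambda,\alpha^\vee)-n$ are pairwise non-proportional as $\alpha$ runs over the reduced root system $\Delta^+(\g,\h)$ and $n$ over the positive integers, so, working in the unique factorization domain $\bbC[\h^*]$, these conditions together show that $\prod_{\alpha\in\Delta^+(\g,\h)}\prod_{n\geq1}((\lambda,\alpha^\vee)-n)^{P(\mu-n\alpha)}$ --- a finite product, since $P(\mu-n\alpha)=0$ for $n$ large --- divides $\det$.

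It remains to match degrees. The candidate divisor has degree $\sum_{\alpha\in\Delta^+(\g,\h)}\sum_{n\geq1}P(\mu-n\alpha)$, which equals $\sum_I\ell(I)$ by the bijection sending a triple consisting of a Kostant partition $J$ of $\mu-n\alpha$, a root $\alpha$, and an integer $n$ to the Kostant partition $I$ obtained by adjoining $n$ copies of $\alpha$ to $J$: for a fixed $I$ the admissible pairs $(\alpha,n)$ number $\sum_\alpha m_\alpha(I)=\ell(I)$, where $m_\alpha(I)$ is the multiplicity of $\alpha$ in $I$. Since $\det$ is not identically zero --- it is classical that $M(\lambda)$ is irreducible for $\lambda$ outside a countable union of hyperplanes, and there the Shapovalov form is nondegenerate --- the divisor and $\det$ are polynomials of the same degree, hence differ only by a nonzero constant, which is the assertion.

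The principal obstacle is the singular-vector input for \emph{non-simple} positive roots: for a simple root $\alpha_i$ the vector $f_{\alpha_i}^{\,n}v_{\lambda-\rho}$ does the job by a one-line $\mathfrak{sl}_2$ computation, but for a general positive root the existence of the singular vector on $H_{\alpha,n}$ rests on the real content of the theorem of Verma, Bernstein--Gelfand--Gelfand and Kac--Kazhdan --- reduction to the rank-two situation, or producing the vector at a generic value of the parameter and specializing (here it is enough to use the integral points of $H_{\alpha,n}$, which are Zariski dense in it). Granting that ingredient, the polynomiality, the degree estimate, the passage from corank to order of vanishing, and the partition identity are all routine bookkeeping.
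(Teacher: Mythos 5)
Your argument is the standard Shapovalov--Kac--Kazhdan proof (degree bound on the Gram determinant via the Harish-Chandra-type projection, divisibility from singular vectors on each hyperplane $H_{\alpha,n}$, and the partition identity $\sum_{\alpha,n}P(\mu-n\alpha)=\sum_I\ell(I)$ to match degrees), and it is correct as outlined; the one ingredient you flag, the existence of singular vectors for non-simple positive roots, is legitimately the content of the Verma/BGG/Kac--Kazhdan theorem, and your dimension count for the radical implicitly uses the standard fact that nonzero maps between Verma modules are injective. The paper itself offers no proof of this proposition --- it is quoted as the classical Shapovalov determinant formula --- so there is nothing to compare against beyond noting that yours is the canonical argument for it.
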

Thus the Shapovalov form is degenerate precisely on the reducibility 
hyperplanes $H_{\alpha,n}$ where $\alpha$ and $n$ are positive.  In any 
region bounded by these hyperplanes, the Shapovalov form stays non-degenerate, 
whence the signature remains constant.

In \cite{W}, Nolan Wallach used an asymptotic argument to determine the 
signature on the largest of these regions:
\begin{theorem} (\cite{W}, Lemma 2.3)
Let $\lambda$ be imaginary and $\mfb$ be $\theta$-stable.  If 
$(\lambda, \alpha^\vee) < 1$ for every positive root $\alpha$, then
$$ch_s M( \lambda ) = 
\frac{e^{\lambda-\rho}}
{\prod_{\alpha \in \Delta^+(\mfk,\h)} ( 1 + e^{-\alpha}) \prod_{\alpha \in 
\Delta^+(\mfp, \h)}(1 - e^{-\alpha})}.$$
\end{theorem}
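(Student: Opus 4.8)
The plan is to carry out the asymptotic argument of Wallach. First, the Shapovalov determinant formula (the preceding Proposition) shows that the hypothesis $(\lambda,\alpha^\vee)<1$ for all $\alpha\in\Delta^+(\g,\h)$ keeps $\lambda$ strictly on one side of every reducibility hyperplane $H_{\alpha,n}$, $n\ge 1$, so $\langle\cdot,\cdot\rangle_\lambda$ is non-degenerate and the signature $(p(\mu),q(\mu))$ on the $\lambda-\mu-\rho$ weight space is defined. The set of imaginary weights (for the fixed $\theta$-stable $\mfb$) satisfying $(\lambda,\alpha^\vee)<1$ for every positive root is a non-empty open convex subset of a real vector space, hence connected, and it meets no reducibility hyperplane; since $p(\mu)-q(\mu)$ is locally constant in $\lambda$ wherever the form is non-degenerate, it is constant on this region. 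It therefore suffices to compute $p(\mu)-q(\mu)$ after replacing $\lambda$ by $t\lambda_0$, where $\lambda_0$ is a fixed imaginary weight with $(\lambda_0,\alpha^\vee)<0$ for all positive roots, in the limit $t\to+\infty$.

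Second, I would record the rank-one calculation. For each root $\alpha$ (imaginary in the present setting), the conjugation $\bar{\cdot}$ carries $\g_\alpha$ onto $\g_{-\alpha}$, so $\overline{e_\alpha}=c_\alpha e_{-\alpha}$ with $c_\alpha\in\bbR\setminus\{0\}$; examining the $\bar{\cdot}$-stable subalgebra $\g_\alpha\oplus\bbC\alpha^\vee\oplus\g_{-\alpha}$, whose real form is $\mathfrak{su}(2)$ when $\alpha$ is compact and $\mathfrak{su}(1,1)$ when $\alpha$ is non-compact, shows $c_\alpha<0$ in the compact case and $c_\alpha>0$ in the non-compact case. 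A direct $\mathfrak{sl}_2$ computation then shows that, for $\lambda=t\lambda_0$, the Shapovalov norm $\langle e_{-\alpha}^m v_{\lambda-\rho},e_{-\alpha}^m v_{\lambda-\rho}\rangle_\lambda$ is a polynomial of degree $m$ in $t$ whose leading coefficient has sign $(\sgn c_\alpha)^m$ --- that is, $(-1)^m$ if $\alpha$ is compact and $+1$ if $\alpha$ is non-compact --- so the signature character of the associated $\mathfrak{sl}_2$ Verma module is $(1+e^{-\alpha})^{-1}$ or $(1-e^{-\alpha})^{-1}$ respectively.

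Third, and this is the technical heart, I would prove that in the antidominant limit the Gram matrix of $\langle\cdot,\cdot\rangle_{t\lambda_0}$ on the $\lambda-\mu-\rho$ weight space is, after rescaling, asymptotically diagonal in the PBW basis. Fix an enumeration $\beta_1,\dots,\beta_N$ of $\Delta^+(\g,\h)$ and root vectors $f_i\in\g_{-\beta_i}$, and use the basis $\{X_Iv_{\lambda-\rho}:X_I=f_1^{m_1}\cdots f_N^{m_N}\}$. Using invariance to rewrite $\langle X_Iv_{\lambda-\rho},X_Jv_{\lambda-\rho}\rangle_{t\lambda_0}$ as $\pm\langle v_{\lambda-\rho},Y_I X_Jv_{\lambda-\rho}\rangle_{t\lambda_0}$, where $Y_I$ is a product of raising operators built from $\overline{X_I}$, and normal-ordering, one sees this is a polynomial in $t$ of degree at most the number of Cartan-producing commutators $[e_\gamma,f_\gamma]$ that can arise. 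For $I=J$ that maximum is $|I|:=\sum_i m_i$, attained by pairing each raising operator against the matching lowering operator, and the resulting leading coefficient has sign $\prod_i(\sgn c_{\beta_i})^{m_i}=(-1)^{\sum_{i:\,\beta_i\in\Delta^+(\mfk,\h)}m_i}$ by the rank-one computation; for $I\ne J$ one checks that strictly fewer such commutators occur, so the $(I,J)$ entry has $t$-degree strictly less than $\tfrac12(|I|+|J|)$. Conjugating the Gram matrix by $\operatorname{diag}(t^{-|I|/2})$ then gives a matrix that converges as $t\to\infty$ to an invertible diagonal matrix, so for $t$ large the signature of the Gram matrix is $\sum_{I:\,\sum_i m_i\beta_i=\mu}(-1)^{\sum_{i:\,\beta_i\in\Delta^+(\mfk,\h)}m_i}$. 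This is exactly the coefficient of $e^{-\mu}$ in $\prod_{\alpha\in\Delta^+(\mfk,\h)}(1+e^{-\alpha})^{-1}\prod_{\alpha\in\Delta^+(\mfp,\h)}(1-e^{-\alpha})^{-1}$, so multiplying through by $e^{\lambda-\rho}$ yields the stated formula.

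The main obstacle is making the degree estimate on the off-diagonal entries rigorous and uniform: one must control the polynomials in $t$ produced by iterating the commutation relations --- in particular the root vectors created by cross-root commutators $[e_\gamma,f_\delta]\in\g_{\gamma-\delta}$ --- and check that no cancellation among the resulting monomials raises the degree back to $\tfrac12(|I|+|J|)$ when $I\ne J$. A workable route is induction on $|I|+|J|$, tracking how many raising and lowering operators of each root survive at each stage; alternatively, this estimate is precisely what is carried out in \cite{W} and may simply be cited. By comparison, the non-degeneracy-and-connectedness reduction and the rank-one sign computation in $\mathfrak{su}(2)$ and $\mathfrak{su}(1,1)$ are routine.
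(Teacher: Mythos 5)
The paper does not prove this statement; it is quoted verbatim from Wallach (\cite{W}, Lemma 2.3) with only the remark that it is established by ``an asymptotic argument.'' Your reconstruction --- reduce by convexity and non-degeneracy to the deep antidominant limit $t\lambda_0$, do the $\mathfrak{su}(2)$ versus $\mathfrak{su}(1,1)$ rank-one sign computation, and show the PBW Gram matrix rescaled by $t^{-|I|/2}$ is asymptotically diagonal (the off-diagonal degree bound following from the standard fact that $\deg_t\langle X_Iv,X_Jv\rangle\le\min(|I|,|J|)$ with equality only when $I=J$) --- is exactly that asymptotic argument and is correct, with the one genuinely technical step properly identified and attributable to \cite{W}.
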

As a historical note, Wallach showed that the Zuckerman functor applied to 
these modules produced unitary representations.

Formulas for signature characters for all irreducible Verma modules admitting 
invariant Hermitian forms may be found in \cite{Y3}.  The proof uses 
the following philosophy found in \cite{V}.  Within any region bounded by 
reducibility hyperplanes, the signature remains constant.  The goal is to 
understand how signatures change as you cross a reducibility hyperplane into 
another region.  If you cross only one reduciblity hyperplane at a time, the 
structure of the corresponding Jantzen filtration (see \ref{JantzenFiltration}
for the definition)
is simple and the signature changes by the signature of the radical, which is 
also a Verma module:

\begin{lemma} \label{WallCrossing} ( \cite{Y3}, Proposition 3.2)
Suppose $\lambda_t : (-\delta,\delta) \to \h^*$ is a path for which every 
$M(\lambda_t)$ admits a non-degenerate invariant Hermitian form.  Suppose 
$M(\lambda_t)$ is irreducible for $t\neq0$ and $\lambda_0$ belongs to the 
reducibility hyperplane $H_{\alpha,n}$ but not to any other reducibility 
hyperplane.  Suppose for $t \in (0, \delta)$ that is in the positive half 
space $\lambda_t \in H_{\alpha,n}^+$ while for $t \in (-\delta,0)$, 
$\lambda_t \in H_{\alpha,n}^-$.
Let $t_1 \in (0, \delta)$ and let $t_2 \in (-\delta, 0)$.
Then:
$$ch_s M(\lambda_{t_1}) = e^{\lambda_{t_1}-\lambda_{t_2}} ch_s M( \lambda_{t_2})
+ 2 \varepsilon( H_{\alpha,n}, \lambda_0) ch_s M( \lambda_{t_1} - n \alpha ).$$
for some sign $\varepsilon( H_{\alpha,n}, \lambda_0 ) = \pm 1$.
We can extend the definition of $\varepsilon$ to other affine hyperplanes by 
setting $\varepsilon( H_{\alpha,n}, \lambda_0)=0$
when the only affine hyperplane $\lambda_0$ belongs to is $H_{\alpha,n}$ and 
$H_{\alpha,n}$ is not a reducibility hyperplane.
\end{lemma}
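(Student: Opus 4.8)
The plan is to localize the problem at each weight space, reduce it to a one–parameter family of Hermitian matrices, and extract the jump in signature from the Jantzen filtration, which the single-hyperplane hypothesis forces to be two-step. First, after replacing the path by a short line segment through $\lambda_0$ in a direction $v$ with $\bar v=-v$ that is transverse to $H_{\alpha,n}$ and points from $H_{\alpha,n}^-$ to $H_{\alpha,n}^+$ — which meets the same two regions adjacent to $\lambda_0$ across $H_{\alpha,n}$ and hence computes the same signatures — I may assume $(\lambda_t,\alpha^\vee)-n$ has a simple zero at $t=0$. For each $\mu\in\Lambda_r^+$ fix a $t$-independent basis of $M(\lambda_t)_{\lambda_t-\mu-\rho}$ obtained by applying a PBW basis of $U(\mfn^-)$ of weight $-\mu$ to the generator $v_{\lambda_t-\rho}$. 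Since $\theta(\mfb)=\mfb$ and $\lambda_t-\rho$ is imaginary throughout, $\langle\cdot,\cdot\rangle_{\lambda_t}$ is Hermitian and is represented in this basis by a Hermitian matrix $B_\mu(t)$ whose entries, by the Shapovalov construction, are polynomial in $\lambda_t$ and hence real-analytic in $t$. Irreducibility of $M(\lambda_t)$ for $t\ne 0$ gives $\det B_\mu(t)\ne 0$ there, so the signature of $B_\mu(t)$ is constant on $(0,\delta)$ and on $(-\delta,0)$; thus $ch_s M(\lambda_{t_1})$ and $ch_s M(\lambda_{t_2})$ are well defined and, after the shift by $e^{\lambda_{t_1}-\lambda_{t_2}}$, the asserted identity does not depend on the choice of $t_1\in(0,\delta)$, $t_2\in(-\delta,0)$.

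Next I would invoke the linear-algebra fact underlying signature deformation: a real-analytic family $B(t)$ of Hermitian matrices with $\det B(t)\ne 0$ for $0<|t|<\delta$ can be diagonalized over convergent power series into $1\times 1$ blocks $t^{a}u(t)$ with $u(0)\ne 0$; writing $B^{(i)}$ for the span of the blocks with $a\ge i$ and $\overline B_i$ for the nondegenerate Hermitian form induced on $B^{(i)}/B^{(i+1)}$ (the coefficient of $t^i$ in $\langle\cdot,\cdot\rangle$), one reads off $\operatorname{ord}_{t=0}\det B(t)=\sum_{i\ge 1}\dim B^{(i)}$ and, comparing signs at $t_+>0$ and $t_-<0$, $\operatorname{sig}B(t_+)-\operatorname{sig}B(t_-)=2\sum_{i\ge 1,\ i\ \mathrm{odd}}\operatorname{sig}\overline B_i$. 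Applied to each $B_\mu(t)$, the spaces $B_\mu^{(i)}$ are the weight spaces of the terms $M(\lambda_0)^{(i)}$ of the Jantzen filtration of \ref{JantzenFiltration}, and $\overline B_i$ is the Jantzen-induced (nondegenerate, invariant, Hermitian) form on $\operatorname{gr}^i:=M(\lambda_0)^{(i)}/M(\lambda_0)^{(i+1)}$; summing over $\mu$ gives
\[
ch_s M(\lambda_{t_1})-e^{\lambda_{t_1}-\lambda_{t_2}}ch_s M(\lambda_{t_2})\;=\;2\,e^{\lambda_{t_1}-\lambda_0}\!\!\sum_{i\ge 1,\ i\ \mathrm{odd}}\!\! ch_s\operatorname{gr}^i,
\]
where $ch_s\operatorname{gr}^i$ is the signature character of $\overline B_i$.

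The next step is to evaluate the right side, and this is where the single-hyperplane hypothesis does the work. By the Shapovalov determinant formula, $\det B_\mu(t)$ equals, up to a nonzero constant, $\prod_{\beta\in\Delta^+(\g,\h)}\prod_{k\ge 1}\bigl((\lambda_t,\beta^\vee)-k\bigr)^{P(\mu-k\beta)}$; since $\lambda_0$ meets only $H_{\alpha,n}$, the sole factor vanishing at $t=0$ is $\bigl((\lambda_t,\alpha^\vee)-n\bigr)^{P(\mu-n\alpha)}$, which (simple zero) vanishes to order $P(\mu-n\alpha)$. Hence $\sum_{i\ge 1}\dim M(\lambda_0)^{(i)}_{\lambda_0-\mu-\rho}=\operatorname{ord}_{t=0}\det B_\mu(t)=P(\mu-n\alpha)=\dim M(\lambda_0-n\alpha)_{\lambda_0-\mu-\rho}$. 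But $M(\lambda_0)^{(1)}$ is the radical of the Shapovalov form, hence the maximal proper submodule of $M(\lambda_0)$, which contains the image of the embedding $M(\lambda_0-n\alpha)\hookrightarrow M(\lambda_0)$ provided by $(\lambda_0,\alpha^\vee)=n$; therefore
\[
\dim M(\lambda_0-n\alpha)_{\lambda_0-\mu-\rho}\;\le\;\dim M(\lambda_0)^{(1)}_{\lambda_0-\mu-\rho}\;\le\;\sum_{i\ge 1}\dim M(\lambda_0)^{(i)}_{\lambda_0-\mu-\rho}\;=\;\dim M(\lambda_0-n\alpha)_{\lambda_0-\mu-\rho},
\]
forcing equality throughout. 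Consequently $M(\lambda_0)^{(1)}=M(\lambda_0-n\alpha)$ and $M(\lambda_0)^{(i)}=0$ for $i\ge 2$, so the only nonzero odd graded piece is $\operatorname{gr}^1\cong M(\lambda_0-n\alpha)$. Since $\overline B_1$ is nondegenerate, $M(\lambda_0-n\alpha)$ is irreducible, and by uniqueness of its invariant Hermitian form up to a real scalar, $ch_s\operatorname{gr}^1=\varepsilon(H_{\alpha,n},\lambda_0)\,ch_s M(\lambda_0-n\alpha)$ with $\varepsilon(H_{\alpha,n},\lambda_0)=\pm 1$. Finally, because $M(\lambda_0-n\alpha)$ is irreducible, for $\delta$ small the point $\lambda_{t_1}-n\alpha$ lies in the same component of the complement of the reducibility hyperplanes as $\lambda_0-n\alpha$, so $e^{\lambda_{t_1}-\lambda_0}ch_s M(\lambda_0-n\alpha)=ch_s M(\lambda_{t_1}-n\alpha)$; substituting into the display above gives the identity. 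If $H_{\alpha,n}$ is not a reducibility hyperplane, nothing degenerates along the path, the two signatures agree, and the formula holds with $\varepsilon=0$.

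I expect the main obstacle to be the careful construction, following Jantzen, of the chain $M(\lambda_0)^{(i)}$ as $\g$-submodules carrying nondegenerate invariant forms whose signatures control the jump — that is, the linear-algebra lemma over the ring of convergent power series, together with the verification that the $B_\mu^{(i)}$ glue to submodules and that the induced forms are invariant — and the bookkeeping that justifies the transverse-segment reduction at the start. Once these are granted, the computation above is routine: the Shapovalov determinant pins down $\operatorname{ord}_{t=0}\det B_\mu(t)$, and the dimension sandwich forces the filtration to be two-step.
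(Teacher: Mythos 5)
Your argument is correct and follows essentially the same route the paper attributes to \cite{Y3}: realize the Shapovalov forms as an analytic family of Hermitian matrices on each weight space, apply the Jantzen filtration together with Vogan's signature-change lemma (Lemma \ref{ChangeLemma} here), and use the Shapovalov determinant plus the embedding $M(\lambda_0-n\alpha)\hookrightarrow M(\lambda_0)$ to force the filtration to be two-step with $\mathrm{gr}^1\cong M(\lambda_0-n\alpha)$, whence the jump is $2\varepsilon\, ch_s M(\lambda_{t_1}-n\alpha)$. The dimension sandwich, the irreducibility of $M(\lambda_0-n\alpha)$ from nondegeneracy of the level-one form, and the bookkeeping of the exponential shifts are all handled correctly.
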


It turns out that $\varepsilon( H_{\alpha,n}, \lambda_0 )$ stays constant 
over $\lambda_0$ in a given Weyl chamber, so we let $\varepsilon( H_{\alpha,n},
s )$ be that value in the Weyl chamber $s \mathfrak{C}_0$.

$\varepsilon( H_{\alpha,n}, s )$ is computed in \cite{Y3}:

\begin{theorem}\label{wgammadefn}( \cite{Y3} Theorems 6.1.12, 5.2.18)
Let $\gamma$ be a positive root and let $\gamma = s_{i_1} \cdots s_{i_{k-1}}
\alpha_k $ be such that $ht( s_{i_j} \cdots s_{i_{k-1}} \alpha_{i_k})$
decreases as $j$ increases.  Let $w_{\gamma} = s_{i_1} \cdots  s_{i_k}$.
If $\gamma$ hyperplanes are reducibility hyperplanes on $s \mathfrak{C}_0$
and if $\gamma$ does not form a type $G_2$ root system with other roots, 
then:
\begin{eqnarray*}
\varepsilon(H_{\gamma,N}, s ) = & &(-1)^{N \# \{ \text{noncompact } \alpha_{i_j}
	: |\alpha_{i_j}| \geq |\gamma| \}} \\
&\times & (-1)^{\#\{\beta \in \Delta( w_\gamma^{-1}) : |\beta| = |\gamma|,
		\beta \neq \gamma, \text{ and } \beta, s_\beta \gamma \in
		\Delta(s^{-1}) \}} \\
&\times & (-1)^{\#\{\beta \in \Delta( w_\gamma^{-1}) : |\beta| \neq |\gamma| 
		\text{ and } \beta, -s_\beta s_\gamma \beta \in \Delta(s^{-1})
		\}}.
\end{eqnarray*}
Let $\alpha_1$ and $\alpha_2$ be the long and short simple roots for a type 
$G_2$ root system, respectively.  Let $\delta_\alpha = 1$ if $\alpha$ is 
compact, and let it be $-1$ if $\alpha$ is non-compact.  We have:
\begin{equation*}
\begin{array}{|c|c|c|c|c|c|c|}
\hline
\text{{\bf Hyperplane}} & \multicolumn{6}{c|}{\text{\bf{Weyl Chamber }}
s\mathfrak{C}_0} \\ \hline \hline
H_{\alpha_1,N} & 
s_1 & s_1 s_2 & s_1 s_2 s_1 &
s_1 s_2 s_1 s_2 & s_1 s_2 s_1 s_2 s_1 & s_1 s_2 s_1 s_2 s_1 s_2  \\ \cline{2-7}
& \delta_{\alpha_1}^N & \delta_{\alpha_1}^N & \delta_{\alpha_1}^N &
\delta_{\alpha_1}^N & \delta_{\alpha_1}^N & \delta_{\alpha_1}^N  \\ \hline
\hline
H_{\alpha_1+\alpha_2,N} & 
s_1 s_2 & s_1 s_2 s_1 & s_1 s_2 s_1 s_2 &
s_1 s_2 s_1 s_2 s_1 & s_1 s_2 s_1 s_2 s_1 s_2 &  s_2 s_1 s_2 s_1 s_2 \\ \cline{2-7}
& \delta_{\alpha_1+\alpha_2}^N &
-\delta_{\alpha_1+\alpha_2}^N &
-\delta_{\alpha_1+\alpha_2}^N &
-\delta_{\alpha_1+\alpha_2}^N &
-\delta_{\alpha_1+\alpha_2}^N &
\delta_{\alpha_1+\alpha_2}^N  \\ \hline \hline
H_{2\alpha_1+3\alpha_2,N} & 
s_1 s_2 s_1 & s_1 s_2 s_1 s_2 & s_1 s_2 s_1 s_2 s_1 & 
s_1 s_2 s_1 s_2 s_1 s_2 & s_2 s_1 s_2 s_1 s_2 & s_2 s_1 s_2 s_1 \\ \cline{2-7}
& \delta_{2\alpha_1+3\alpha_2}^N & -\delta_{2\alpha_1+3\alpha_2}^N & 
\delta_{2\alpha_1+3\alpha_2}^N & \delta_{2\alpha_1+3\alpha_2}^N & 
-\delta_{2\alpha_1+3\alpha_2}^N & \delta_{2\alpha_1+3\alpha_2}^N \\
 \hline \hline

H_{\alpha_1+2\alpha_2,N} & 
s_1 s_2 s_1 s_2 & s_1 s_2 s_1 s_2 s_1 & s_1 s_2 s_1 s_2 s_1 s_2 & 
s_2 s_1 s_2 s_1 s_2 & s_2 s_1 s_2 s_1 & s_2 s_1 s_2 \\ \cline{2-7}
& \delta_{\alpha_1+2\alpha_2}^N & -\delta_{\alpha_1+2\alpha_2}^N & 
\delta_{\alpha_1+2\alpha_2}^N & 
\delta_{\alpha_1+2\alpha_2}^N & -\delta_{\alpha_1+2\alpha_2}^N & 
\delta_{\alpha_1+2\alpha_2}^N  
\\ \hline \hline
H_{\alpha_1+3\alpha_2,N} & 
s_1 s_2 s_1 s_2 s_1 & s_1 s_2 s_1 s_2 s_1 s_2 & s_2 s_1 s_2 s_1 s_2 & 
s_2 s_1 s_2 s_1 & s_2 s_1 s_2 & s_2 s_1 \\ \cline{2-7}
& \delta_{\alpha_1+3\alpha_2}^N & 
-\delta_{\alpha_1+3\alpha_2}^N &
-\delta_{\alpha_1+3\alpha_2}^N &
-\delta_{\alpha_1+3\alpha_2}^N &
-\delta_{\alpha_1+3\alpha_2}^N &
\delta_{\alpha_1+3\alpha_2}^N \\ \hline \hline
H_{\alpha_2,N} & 
s_1 s_2 s_1 s_2 s_1 s_2 & s_2 s_1 s_2 s_1 s_2 & s_2 s_1 s_2 s_1 &
s_2 s_1 s_2 & s_2 s_1 & s_2 \\ \cline{2-7}
& \delta_{\alpha_2}^N & \delta_{\alpha_2}^N & \delta_{\alpha_2}^N &
\delta_{\alpha_2}^N & \delta_{\alpha_2}^N & \delta_{\alpha_2}^N
\\ \hline
\end{array} 
\end{equation*}
\end{theorem}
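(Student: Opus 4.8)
The plan is to extract $\varepsilon(H_{\gamma,N},s)$ directly from its characterization in Lemma~\ref{WallCrossing}: it is the sign by which the signature character jumps when a path crosses the single reducibility hyperplane $H_{\gamma,N}$ from the negative side to the positive side while the endpoints lie in chambers of the $s\mathfrak{C}_0$ family. At such a wall the radical of the degenerating Shapovalov form is again a Verma module --- the module $M(\lambda_{t_1}-N\gamma)$ appearing in the last term of Lemma~\ref{WallCrossing} --- so the jump sign equals the sign of the Hermitian form induced on that submodule measured against the Shapovalov normalization of the ambient module. Thus the first reduction is to identify $\varepsilon$ with the sign of the scalar by which a distinguished intertwining map between the relevant pair of Verma modules acts on canonical generators; equivalently, with the sign of an appropriate specialization at the wall of a product of Shapovalov-type factors, as governed by the Jantzen sum formula.

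Next I would factor that intertwining map. The hypothesis supplies an expression $\gamma=s_{i_1}\cdots s_{i_{k-1}}\alpha_{i_k}$ in which the height strictly decreases at each step, which is precisely the condition that all partial products $s_{i_j}\cdots s_{i_{k-1}}\alpha_{i_k}$ are positive roots and that $w_\gamma=s_{i_1}\cdots s_{i_k}$ is reduced of length $k$. The map then decomposes as a composite of $k$ elementary intertwining maps, each attached to a simple root $\alpha_{i_j}$ and an integer exponent $m_j$ read off from the relevant coroot pairing, and each contributing the sign $\delta_{\alpha_{i_j}}^{m_j}$. Tracking these exponents gives the first factor: the simple roots with $|\alpha_{i_j}|\ge|\gamma|$ are exactly those for which $m_j$ acquires odd parity $N$ times over the course of the factorization, producing $(-1)^{N\,\#\{\text{noncompact }\alpha_{i_j}:|\alpha_{i_j}|\ge|\gamma|\}}$, whereas the shorter simple roots contribute even exponents to the $N$-part and drop out.

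The element $w_\gamma$ is a model depending only on $\gamma$, while $\varepsilon$ genuinely depends on the chamber $s\mathfrak{C}_0$; reconciling the two is the remaining work and is the source of the last two factors. The idea is to transport the reference configuration to $s\mathfrak{C}_0$ and watch how the sign of each elementary factor above changes: a factor flips exactly when a root in the inversion set $\Delta(w_\gamma^{-1})$ is carried across zero relative to $\Delta(s^{-1})$, and sorting the flips by whether the offending root $\beta$ has the same length as $\gamma$ (an $A_1\times A_1$ or $A_2$ configuration, handled by the pair $\beta,\,s_\beta\gamma$) or a different length (a $B_2$ configuration, handled by the pair $\beta,\,-s_\beta s_\gamma\beta$) yields the second and third counts exactly as stated, the conditions $\beta,s_\beta\gamma\in\Delta(s^{-1})$ and $\beta,-s_\beta s_\gamma\beta\in\Delta(s^{-1})$ being the precise record of which roots get swapped into or out of $\Delta(s^{-1})$. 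I would organize this step as an induction on $\ell(s)$, checking at each simple-reflection step that $\Delta(s^{-1})$ changes by a single root and that the resulting change in the product of signs matches the predicted change in the two inversion-set counts.

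The $G_2$ case must be handled separately --- hence the explicit table --- because when $\gamma$ lies in a $G_2$ subsystem the ratio of root lengths is $\sqrt3$, so intermediate roots in the factorization can be simultaneously longer and shorter than $\gamma$ and the length dichotomy underlying the $A_2$ versus $B_2$ analysis breaks down; for each of the six positive roots and six chambers the single wall crossing is computed directly from Wallach's formula (\cite{W}) together with Lemma~\ref{WallCrossing}. I expect the main obstacle to be the bookkeeping of the third paragraph: showing that the total sign discrepancy between the $\gamma$-model and the chamber $s$ is captured \emph{exactly} by those two inversion-set counts --- that every other would-be contribution cancels in pairs outside of $G_2$ --- and doing so uniformly across root lengths and across compact and non-compact types, so that the parity of non-compact roots emerges correctly rather than off by a global sign.
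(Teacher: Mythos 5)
This theorem is not proved in the present paper at all: it is quoted from \cite{Y3} (Theorems 6.1.12 and 5.2.18), so there is no internal proof to compare your argument against; the standard your proposal must meet is therefore an actual derivation, and it does not meet it. The first reduction --- that $\varepsilon(H_{\gamma,N},s)$ equals the sign of the scalar relating the induced form on the radical $M(\lambda_0 - N\gamma)$ to that module's own Shapovalov form --- is the right starting point, but you do not establish it: you would need the Jantzen-filtration analysis showing that at a generic point of the wall the radical is a single Verma module and that the first-order term of the degenerating form is a nonzero real multiple of the Shapovalov form on it. The parity claim in your second paragraph (that the simple factors $\alpha_{i_j}$ with $|\alpha_{i_j}|\ge|\gamma|$ are precisely the ones contributing an odd power of $\delta_{\alpha_{i_j}}$, and do so $N$ times) is asserted rather than derived; it requires the computation that $2(\beta,\alpha_{i_j})/(\alpha_{i_j},\alpha_{i_j})$ is even exactly when $\alpha_{i_j}$ is strictly shorter than $\gamma$ outside type $G_2$ --- the same computation that this paper carries out in the proof of Theorem \ref{EpsilonValues}(2) --- and without it the first factor of the formula is unjustified.

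The decisive gap is your third paragraph. The entire content of the theorem is that the chamber dependence of $\varepsilon$ is recorded by exactly the two stated inversion-set counts, with the equal-length pairs $(\beta, s_\beta\gamma)$ and the unequal-length pairs $(\beta, -s_\beta s_\gamma\beta)$ and nothing else; you write that you ``expect the main obstacle'' to be showing that every other contribution cancels, and then you do not show it. An induction on $\ell(s)$ is the right shape, but at each step you must (i) identify which single root enters or leaves $\Delta(s^{-1})$, (ii) determine how the sign of the induced form on the radical changes when $\lambda_0$ is moved to the adjacent chamber along the wall $H_{\gamma,N}$, and (iii) match that change against the change in the two counts. Step (ii) is itself a nontrivial signature computation, not a formal consequence of anything you have set up, and it is exactly the place where the $A_2$ versus $B_2$ case division must be justified rather than merely named. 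As it stands, the proposal identifies the correct ingredients and the correct final answer but proves none of the three factors, so it cannot be accepted as a proof of the theorem.
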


Since alcoves defined by the action of the affine Weyl group are bounded by 
affine 
hyperplanes of the form $H_{\alpha,n}$, where $\alpha$ is a root and $n \in 
\bbZ$, the signature within the interior of an alcove is constant and it 
makes sense to define $\varepsilon( A, A')$ for adjacent alcoves $A, A'$ 
separated by the affine hyperplane $H_{\alpha,n}$ where for $\lambda \in A$
and $\lambda' \in A'$:
$$ch_s \, M( \lambda ) = e^{\lambda-\lambda'} ch_s \, M( \lambda' ) + 2 
\varepsilon( A, A' ) ch_s \, M( \lambda - n\alpha ).$$
Observe that $\varepsilon( A, A' ) = - \varepsilon( A', A )$.

Given our formula for $\varepsilon(H_{\alpha,n},s)$, we know how signatures 
change as we cross reducibility hyperplanes.  We cross one reducibility 
hyperplane at a time until we reach the region where signatures are known 
by Wallach's work.  We find by induction:
\begin{theorem} (\cite{Y3} Theorem 4.6)
Let: 
\begin{itemize}
\item[-] $\bar{\cdot} : W_a \to W$ denote the group homomorphism defined by 
sending $w = ts$ to $s$ where $t$ is translation by an element of the root 
lattice and $s$ is an element of $W$
\item[-] $\tilde{\cdot} : W_a \to W$ be defined by sending $w$ to $\tilde{w}$ 
where $\tilde{w} \mathfrak{C}_0$ is the Weyl chamber containing $w A_0$
\item[-] $\lambda \in w A_0$ such that $M(\lambda )$ admits a nondegenerate 
	invariant Hermitian form
\item[-]  $\displaystyle{R(\mu ) = \frac{e^{\mu-\rho}}
{\prod_{\alpha \in \Delta^+(\mfk,\h)} ( 1 + e^{-\alpha}) \prod_{\alpha \in 
\Delta^+(\mfp, \h)}(1 - e^{-\alpha})}}$
\item[-] $w A_0 = C_0 \xrightarrow{r_1} C_1 \xrightarrow{r_2} \cdots 
	\xrightarrow{r_\ell} C_\ell = \tilde{w} A_0$ a path from $w A_0$ to 
	$\tilde{w} A_0$ where the $C_i$'s are the alcoves on the path and 
	the $r_i$'s are the reflections through the affine hyperplanes 
	separating the alcoves
\end{itemize}
Then
$$ ch_s M(\lambda ) = \sum_{I = \{ i_1 < \cdots < i_k \} \subset \{1,\ldots,\ell
	\}} \varepsilon( I ) 2^{|I|} R( \overline{r_{i_1}r_{i_2} \cdots 
r_{i_k}} r_{i_k} \cdots r_{i_1} \lambda )
$$
where $\varepsilon( \{\} ) = 1$ and 
$\varepsilon( I ) = \varepsilon( C_{i_1 -1}, C_{i_1}) \varepsilon( \overline{
r_{i_1}} C_{i_2 -1}, \overline{r_{i_1}}C_{i_2}) \cdots \varepsilon( 
\overline{r_{i_1} \cdots r_{i_{k-1}}} C_{i_k -1}, 
\overline{r_{i_1} \cdots r_{i_{k-1}}} C_{i_k})$
if $I \neq \{\}$.
\end{theorem}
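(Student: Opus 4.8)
The plan is to prove the formula by induction on the length $\ell$ of the gallery $C_0 \to \cdots \to C_\ell$, using Wallach's theorem and the alcove form of Lemma~\ref{WallCrossing} as the only external inputs. It is convenient to first reduce to the case in which the gallery stays inside the Weyl chamber $\tilde w \mathfrak{C}_0$: both endpoints $wA_0$ and $\tilde wA_0$ lie in that chamber, the alcoves it contains form a gallery-connected set, and every edge crossing a non-reducibility hyperplane contributes a factor $\varepsilon = 0$ by the extended definition in Lemma~\ref{WallCrossing}, so that once the formula is established for one gallery its right-hand side is visibly gallery-independent, and it suffices to treat a convenient one.

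\emph{Base case ($\ell = 0$).} Here $C_0 = \tilde wA_0$, so one must show $ch_s M(\lambda) = R(\lambda)$ for every alcove of the form $\sigma A_0$ with $\sigma \in W$. I would argue by a nested induction on $\ell(\sigma)$. If $\sigma = e$, the interior of $A_0$ lies in the antidominant chamber, where $(\lambda,\alpha^\vee) < 0 < 1$ for every positive root $\alpha$, and this is exactly Wallach's theorem. If $\ell(\sigma) \geq 1$, write $\sigma = \sigma' s_i$ with $\ell(\sigma') = \ell(\sigma) - 1$; then $\sigma'A_0$ and $\sigma A_0 = \sigma' s_i A_0$ are adjacent across $H_{\sigma'\alpha_i, 0}$, which is not a reducibility hyperplane, so $\varepsilon(\sigma'A_0, \sigma A_0) = 0$ and the alcove form of Lemma~\ref{WallCrossing} degenerates to $ch_s M(\lambda) = e^{\lambda-\mu}\, ch_s M(\mu)$ for $\mu \in \sigma'A_0$. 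Because the denominator of $R$ does not depend on the weight, $e^{\lambda-\mu}R(\mu) = R(\lambda)$, and $ch_s M(\mu) = R(\mu)$ by the nested inductive hypothesis, so $ch_s M(\lambda) = R(\lambda)$. (Along the way one uses that in the equal rank case $\theta\mfb = \mfb$ forces $\h_0$ to be compact, so every root is imaginary and $W_a$ preserves the set of imaginary weights; hence all the Verma modules above are Hermitian, and non-degeneracy persists across $H_{\sigma'\alpha_i, 0}$.)

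\emph{Inductive step ($\ell \geq 1$).} Apply the alcove form of Lemma~\ref{WallCrossing} at the first edge, with $r_1$ the affine reflection in the separating hyperplane $H_{\alpha, n}$: for $\lambda \in C_0$ and $\lambda' \in C_1$,
$$ch_s M(\lambda) = e^{\lambda-\lambda'}\, ch_s M(\lambda') + 2\,\varepsilon(C_0, C_1)\, ch_s M(\lambda - n\alpha).$$
For the first summand, apply the inductive hypothesis to the gallery $C_1 \to \cdots \to C_\ell$ (length $\ell-1$, still inside $\tilde w\mathfrak{C}_0$ and ending at $\tilde wA_0$), then multiply by $e^{\lambda-\lambda'}$; using $R(\nu_1)/R(\nu_2) = e^{\nu_1-\nu_2}$ and the fact that $\overline{r_{j_1}\cdots r_{j_m}}$ is the inverse of the linear part $\overline{r_{j_m}\cdots r_{j_1}}$ of the affine map $r_{j_m}\cdots r_{j_1}$, each term becomes the term of the target sum indexed by the corresponding $J \not\ni 1$, with the same sign. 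For the second summand I would use $\overline{r_1}\,r_1 = t_{-n\alpha}$, so that $\lambda - n\alpha = \overline{r_1}\,r_1\lambda$ lies in $\overline{r_1}C_1 \subset \overline{r_1}\tilde w\mathfrak{C}_0$; applying $\overline{r_1} \in W$ to the gallery $C_1 \to \cdots \to C_\ell$ yields a gallery $\overline{r_1}C_1 \to \cdots \to \overline{r_1}\tilde wA_0$ of length $\ell-1$ whose edge reflections are the conjugates $\overline{r_1}r_j\overline{r_1}$, and the inductive hypothesis applies to $ch_s M(\lambda - n\alpha)$. Expanding this and simplifying with the homomorphism property of $\overline{\cdot}$ and $\overline{\overline{r_1}} = \overline{r_1}$, the arguments of $R$ collapse to $\overline{r_1 r_{j_1}\cdots r_{j_m}}\, r_{j_m}\cdots r_{j_1} r_1\lambda$ and the accumulated sign becomes $\varepsilon(C_0, C_1)\prod_p \varepsilon(\overline{r_1 r_{j_1}\cdots r_{j_{p-1}}}C_{j_p-1},\, \overline{r_1 r_{j_1}\cdots r_{j_{p-1}}}C_{j_p})$, which is $\varepsilon(\{1\}\cup J)$, while the extra $2$ upgrades $2^{|J|}$ to $2^{|\{1\}\cup J|}$. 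Since the subsets $I$ with $1 \notin I$ arise from the first summand and those with $1 \in I$ from the second, the induction closes.

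The genuinely delicate part is not analytic — Wallach's theorem and Lemma~\ref{WallCrossing} supply everything of that kind — but the combinatorial bookkeeping in the inductive step: one must verify that conjugating the gallery by $\overline{r_1}$ carries the nested products of $\varepsilon$-factors produced by the inductive hypothesis to exactly the products $\varepsilon(I)$ in the statement, and that the conjugated gallery is still admissible in the sense required by Lemma~\ref{WallCrossing} (its alcoves support non-degenerate invariant Hermitian forms, and consecutive alcoves are separated by the conjugated reflections). A subsidiary point that needs to be pinned down is the reduction to galleries inside a single Weyl chamber — equivalently, gallery-independence of the right-hand side — which should itself follow from the induction once the formula holds for one gallery.
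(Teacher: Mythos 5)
Your proposal is correct and is essentially the argument the paper indicates for this result, which it imports from \cite{Y3}: induct on the length of the gallery, with Wallach's theorem on the region $(\lambda,\alpha^\vee)<1$ as the base case and the alcove form of Lemma \ref{WallCrossing} at the first wall as the inductive step, the key bookkeeping facts being $\overline{r_1}\,r_1 = t_{-n\alpha}$ and that $\overline{r_{i_1}\cdots r_{i_k}}\,r_{i_k}\cdots r_{i_1}$ has trivial linear part, so it commutes with multiplication by $e^{\lambda-\lambda'}$. The one adjustment I would make is to drop the preliminary reduction to galleries inside $\tilde w\mathfrak{C}_0$ and the after-the-fact gallery-independence claim (which is circular as phrased): instead strengthen the inductive statement to galleries ending at an arbitrary alcove $\sigma A_0$ with $\sigma \in W$ --- Wallach's theorem already gives $ch_s M = R$ on every such alcove since $|(\lambda,\alpha^\vee)|<1$ there --- after which the same induction handles arbitrary paths, including your conjugated gallery ending at $\overline{r_1}\tilde w A_0$, with gallery-independence falling out as a corollary.
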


\section{Simplifying the Sign Formulas $\varepsilon$}
\begin{notation}
We fix our notation for this section.
\begin{itemize}
\item[-] $\gamma \in \Delta^+( \g,\h)$ 
\item[-] $N \in \bbZ^+$
\item[-] $w \in W$
\item[-] $\alpha$ is a simple root with corresponding simple reflection $s$
\item[-] $x \in W$ satisfies $xs > x$.
\end{itemize}
\end{notation}

In this section, we compute $\varepsilon$ in the cases 
which appear in the recursion formulas for computing signed Kazhdan-Lusztig 
polynomials from \cite{Y}.

We begin by showing that the second and third terms in the expression in 
Theorem \ref{wgammadefn} for 
$\varepsilon( H_{x\alpha,N}, xs )$ are $1$.
This requires a careful study of $w_\gamma$ and $w_{x\alpha}$ which was 
defined in \ref{wgammadefn}.

Recall that $\Delta(w) = \lbrace \alpha \in \Delta^+ | w\alpha < 0 
\rbrace$.  If $w=s_{i_1}s_{i_2} \cdots s_{i_k}$ is a reduced expression, 
then $\Delta( w^{-1} ) = \lbrace \alpha_{i_1}, s_{i_1}\alpha_{i_2}, \ldots, 
s_{i_1}s_{i_2} \cdots s_{i_{k-1}} \alpha_{i_k} \rbrace$.
Note that $w_\gamma$ depends on the choice of expression $\gamma = 
s_{i_1}s_{i_2} \cdots s_{i_{k-1}} \alpha_{i_k}$, but for all choices, 
$\gamma \in \Delta( w_\gamma^{-1} )$.  We need the following 
technical lemmas:
\begin{lemma} \label{ContainmentLemma}
There exists a choice of $w_{x\alpha}$ such that $\Delta(
w_{x\alpha}^{-1} ) \subset \Delta( (xs)^{-1} )$.
\begin{proof}
The proof is by double induction on the length of $x$ and the height of 
$x\alpha$. \\
First induction:  on the length of $x$. \\
First base case:  $\ell(x) = 0$.  If $x = 1$, then $w_{x\alpha} = s_\alpha$ and 
$\Delta( w_{x\alpha}^{-1} ) = \lbrace \alpha \rbrace = \Delta( xs )$.
(Note that the only possible height for $x\alpha$ is $1$.)\\
First inductive hypothesis:  Assume for all $x$ of length less than or equal 
to $k$ and $x < xs$ that there exists $w_{x\alpha}$ such that $\Delta( 
(xs)^{-1} ) \supset \Delta( w_{x \alpha}^{-1})$. \\
Induction step:
We prove this by induction on $ht( x\alpha )$. \\
Second base case:
If the height of $x \alpha$ is $1$, then $w_{x\alpha} =
s_{x \alpha}$ has length $1$ and $\Delta( s_{x\alpha} ) = \lbrace x
\alpha \rbrace$.  Since $sx^{-1}x\alpha = s \alpha = - \alpha$, we
have $x\alpha \in \Delta( (xs)^{-1} )$. \\
Consider $x'$ of length $k+1$, $x'\alpha > 0$.  If $ht( x'\alpha ) > 1$, then 
there exists a simple reflection $t=s_\mu$ such that $x'=tx$ for some $x$ of 
length $k$ and $ht( x'\alpha) = ht( tx\alpha ) > ht( x\alpha ) > 0$.  By our 
inductive hypothesis, there exists $w_{x\alpha}$ with $\Delta( 
w_{x\alpha}^{-1}) \subset \Delta( (xs)^{-1} )$.  Since $tx\alpha > 0$, we see 
that $txs > tx$.  Since 
$tx > x$, we have $txs > xs$.  Therefore
$$ \Delta( (txs)^{-1} ) = \Delta( sx^{-1}t ) = t\Delta( (xs)^{-1} ) \cup 
\lbrace \mu \rbrace.$$
Choose $w_{tx\alpha} = tw_{x\alpha}$.  From Lemma 5.3.3 of \cite{Y3}, $tw_{x
\alpha} > w_{x\alpha}$ and therefore
$$ \Delta( w_{tx\alpha}^{-1}) = \Delta( (tw_{x\alpha})^{-1}) = t\Delta( 
w_{x\alpha}^{-1} ) \cup \lbrace \mu \rbrace.$$
It folows that $\Delta( w_{tx\alpha}^{-1}) \subset \Delta( (txs)^{-1} )$.
\end{proof}
\end{lemma}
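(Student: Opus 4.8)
The plan is to argue by a double induction, the outer induction on $\ell(x)$ and the inner one on $ht(x\alpha)$. When $\ell(x)=0$ we have $x=1$, $x\alpha=\alpha$, the only admissible choice is $w_{x\alpha}=s$, and $\Delta(w_{x\alpha}^{-1})=\{\alpha\}=\Delta((xs)^{-1})$, so that base case is immediate. For the inner base case $ht(x\alpha)=1$ the root $x\alpha$ is simple, hence $w_{x\alpha}=s_{x\alpha}$ has length $1$, $\Delta(w_{x\alpha}^{-1})=\{x\alpha\}$, and $x\alpha\in\Delta((xs)^{-1})$ because $(xs)^{-1}(x\alpha)=sx^{-1}(x\alpha)=s\alpha=-\alpha<0$. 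So from now on I would fix $x$ with $\ell(x)=k+1\ge 1$ and $ht(x\alpha)\ge 2$, assuming the statement for all elements of strictly smaller length, and, at length $k+1$, for all strictly smaller heights of the $\alpha$-image.

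The idea is to peel a simple reflection off the left. Choose a left descent $t=s_\mu$ of $x$ and write $x=tx'$ with $\ell(x')=k$. Since $ht(x\alpha)\ge 2$ forces $x\alpha\ne\mu$, the root $x'\alpha=s_\mu(x\alpha)$ is again positive (and $x'\alpha\ne\mu$, else $x\alpha=-\mu<0$), so $x's>x'$ and the induction hypothesis applies to $x'$, giving a choice of $w_{x'\alpha}$ with $\Delta(w_{x'\alpha}^{-1})\subset\Delta((x's)^{-1})$. I would then set $w_{x\alpha}:=t\,w_{x'\alpha}$. Two points must be checked to see this is an admissible $w_{x\alpha}$: that $t\,w_{x'\alpha}$ is reduced, and that prepending $s_\mu$ to a height-decreasing expression realising $w_{x'\alpha}$ yields such an expression for $x\alpha=s_\mu(x'\alpha)$. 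The first is exactly Lemma 5.3.3 of \cite{Y3}, which gives $t\,w_{x'\alpha}>w_{x'\alpha}$; the second is the condition that dictates the choice of the left descent $t$ (for instance, any left descent with $(x\alpha,\mu^\vee)\ge 0$ does the job), and it is here that the inner induction on $ht(x\alpha)$ is used, to organize the configurations that are not immediately of this form.

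What is then left is routine inversion-set bookkeeping. From $x\alpha>0$ we get $xs>x$; combined with $x's>x'$ and $\ell(x')=\ell(x)-1$ this gives $xs=t(x's)$ with $\ell(xs)=\ell(x's)+1$, hence $\Delta((xs)^{-1})=\{\mu\}\cup t(\Delta((x's)^{-1}))$; likewise $\ell(t\,w_{x'\alpha})=\ell(w_{x'\alpha})+1$ gives $\Delta(w_{x\alpha}^{-1})=\{\mu\}\cup t(\Delta(w_{x'\alpha}^{-1}))$. Since $t$ permutes the roots, applying it to the inclusion $\Delta(w_{x'\alpha}^{-1})\subset\Delta((x's)^{-1})$ and adjoining $\mu$ to both sides yields $\Delta(w_{x\alpha}^{-1})\subset\Delta((xs)^{-1})$, closing the induction. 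I expect the real difficulty to be concentrated in the second paragraph: reconciling the descent condition $\ell(tx')<\ell(x)$ with the requirement that $t\,w_{x'\alpha}$ be one of the reduced words eligible to serve as $w_{x\alpha}$. Once one has a firm grip on which reduced words qualify as $w_\gamma$ and on the stability of that class under left multiplication by a simple reflection — the content of \cite{Y3}, Lemma 5.3.3, and of a careful study of $w_\gamma$ and $w_{x\alpha}$ — the length identities above are immediate.
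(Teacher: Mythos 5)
Your proposal follows essentially the same route as the paper: the same double induction on $\ell(x)$ and $ht(x\alpha)$, the same peeling off of a left descent $t=s_\mu$, the same choice $w_{x\alpha}=t\,w_{x'\alpha}$ justified by Lemma 5.3.3 of \cite{Y3}, and the same inversion-set identities $\Delta((xs)^{-1})=t\Delta((x's)^{-1})\cup\{\mu\}$ and $\Delta(w_{x\alpha}^{-1})=t\Delta(w_{x'\alpha}^{-1})\cup\{\mu\}$. The only discrepancies are minor: the descent $t$ must satisfy $(x\alpha,\mu^\vee)>0$ strictly (not $\geq 0$) so that the height strictly drops as the definition of $w_\gamma$ requires, and where you defer the existence of such a simultaneous length-and-height descent to the inner induction, the paper simply asserts it outright at the corresponding point.
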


\begin{lemma} \label{PositivityLemma}
If $\beta \in \Delta( w_\gamma^{-1} )$, then $(\beta, \gamma ) > 0$.
In particular, $\beta$ and $\gamma$ are not orthogonal.
\begin{proof}
If $s_{i_1} \cdots s_{i_k}$ is a reduced expression for $w_{\gamma}$ 
given by Lemma 5.3.3 of \cite{Y3}, then $\beta = s_{i_1}s_{i_2}\cdots 
s_{i_{j - 1}} \alpha_{i_j}$ for some $1 \leq j < k$ and $\gamma = 
s_{i_1}s_{i_2} \cdots s_{i_{k-1}} 
\alpha_{i_k}$.  Thus $(\beta, \gamma ) = ( \alpha_{i_j}, s_{i_j}s_{i_{j+1}} 
\cdots s_{i_{k-1}} \alpha_{i_k} ) > 0$ since 
$ht(  s_{i_j}s_{i_{j+1}} \cdots s_{i_{k-1}} \alpha_{i_k} ) > 
ht(  s_{i_{j+1}}s_{i_{j+2}} \cdots s_{i_{k-1}} \alpha_{i_k} )$ (recall that 
$w_{\gamma}$ was defined to have this height property).
\end{proof}
\end{lemma}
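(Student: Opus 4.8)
The plan is to work with the specific reduced expression for $w_\gamma$ furnished by Lemma 5.3.3 of \cite{Y3} and used in \ref{wgammadefn}: write $w_\gamma = s_{i_1} \cdots s_{i_k}$ with $\gamma = s_{i_1} \cdots s_{i_{k-1}} \alpha_{i_k}$, and set $\gamma_m := s_{i_m} s_{i_{m+1}} \cdots s_{i_{k-1}} \alpha_{i_k}$ for $1 \le m \le k$, so that $\gamma_1 = \gamma$, $\gamma_k = \alpha_{i_k}$, and $ht(\gamma_m)$ strictly decreases as $m$ increases. Using the description $\Delta(w_\gamma^{-1}) = \lbrace \alpha_{i_1}, s_{i_1}\alpha_{i_2}, \ldots, s_{i_1} \cdots s_{i_{k-1}} \alpha_{i_k} \rbrace$ recalled just above, every $\beta \in \Delta(w_\gamma^{-1})$ has the form $\beta = s_{i_1} \cdots s_{i_{j-1}} \alpha_{i_j}$ for some $1 \le j \le k$. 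If $j = k$ then $\beta = \gamma$ and $(\beta, \gamma) = (\gamma, \gamma) > 0$, so from now on I would assume $j < k$.

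For $j < k$ I would exploit the $W$-invariance of the bilinear form: applying $s_{i_{j-1}} \cdots s_{i_1}$ to both slots gives
$$ (\beta, \gamma) = \bigl( \alpha_{i_j}, \; s_{i_j} s_{i_{j+1}} \cdots s_{i_{k-1}} \alpha_{i_k} \bigr) = (\alpha_{i_j}, \gamma_j). $$
Since $\gamma_{j+1} = s_{i_j} \gamma_j = \gamma_j - (\gamma_j, \alpha_{i_j}^\vee)\, \alpha_{i_j}$, comparing heights yields $ht(\gamma_{j+1}) = ht(\gamma_j) - (\gamma_j, \alpha_{i_j}^\vee)$, and the defining height property of $w_\gamma$ forces $ht(\gamma_{j+1}) < ht(\gamma_j)$, hence $(\gamma_j, \alpha_{i_j}^\vee) > 0$. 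As $(\alpha_{i_j}, \alpha_{i_j}) > 0$, this gives $(\gamma_j, \alpha_{i_j}) > 0$, i.e.\ $(\beta, \gamma) > 0$; in particular $\beta$ and $\gamma$ are not orthogonal.

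The only delicate point is that the height decrease in Lemma 5.3.3 of \cite{Y3} is \emph{strict} at each step, equivalently that each $(\gamma_j, \alpha_{i_j}^\vee)$ is a positive integer rather than merely nonnegative; this is precisely what the construction of $w_\gamma$ guarantees, since at each stage one peels off a simple reflection that genuinely lowers the height (which is always possible for a non-simple positive root). Granting that, the lemma is an immediate consequence of the reduced-word bookkeeping for $\Delta(w_\gamma^{-1})$ together with $W$-invariance, so I do not anticipate a serious obstacle here.
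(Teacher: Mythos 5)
Your proposal is correct and follows essentially the same route as the paper's proof: both use the reduced expression from Lemma 5.3.3 of \cite{Y3} to write $\beta = s_{i_1}\cdots s_{i_{j-1}}\alpha_{i_j}$, apply $W$-invariance to reduce to $(\alpha_{i_j}, s_{i_j}\cdots s_{i_{k-1}}\alpha_{i_k})$, and invoke the strict height-decreasing property of $w_\gamma$. You merely make explicit the step the paper leaves implicit (that $ht(\gamma_j)-ht(\gamma_{j+1}) = (\gamma_j,\alpha_{i_j}^\vee) > 0$) and handle the trivial case $\beta = \gamma$ separately.
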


\begin{definition}
Let $\cS_{w_{x\alpha}}^2 = \lbrace \beta \in \Delta(w_{x\alpha}^{-1} ) \, : 
\, |\beta| = |x\alpha|, \beta \neq x\alpha, \text{ and } \beta, s_\beta x\alpha 
\in \Delta( (xs)^{-1} ) \rbrace$.  (See the second term in the formula 
for $\varepsilon$.)
\end{definition}

\begin{lemma}\label{PairingLemma2}
Suppose $w_{x\alpha}$ is constructed as in Lemma \ref{ContainmentLemma}.  
Then $\cS_{w_{x\alpha}}^2  = \{\}$.
\begin{proof}
Suppose $\beta \in \cS_{w_{x\alpha}}^2$.  Since $\beta$ and $x\alpha$ are 
the same length, they generate a type $A_2$ root system.  Since 
$(\beta,x\alpha)>0$ by Lemma \ref{PositivityLemma}, therefore $s_\beta x\alpha 
= x\alpha - \beta$.  To belong to $\cS_{w_{x\alpha}}^2$,  
$\beta \in \Delta( (xs)^{-1}) \Rightarrow sx^{-1} \beta < 0$.  However, 
$sx^{-1} s_\beta x \alpha = sx^{-1}( x\alpha - \beta ) = -\alpha -sx^{-1}\beta$
which must be positive since $\alpha$ is simple and $sx^{-1} \beta < 0$.
This contradicts the condition $s_\beta x\alpha \in \Delta( (xs)^{-1})$, so 
$\beta \not \in \cS_{w_{x\alpha}}^2$.
\end{proof}
\end{lemma}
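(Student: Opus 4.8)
The plan is a short argument by contradiction. Suppose $\beta \in \cS_{w_{x\alpha}}^2$; I will derive a contradiction from its three defining requirements, namely $|\beta| = |x\alpha|$, $\beta \neq x\alpha$, and $\beta, s_\beta x\alpha \in \Delta((xs)^{-1})$.

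First I would pin down the position of $\beta$ relative to $x\alpha$. Since $\beta \in \Delta(w_{x\alpha}^{-1})$, Lemma \ref{PositivityLemma} gives $(\beta, x\alpha) > 0$. Combined with $|\beta| = |x\alpha|$ and $\beta \neq x\alpha$, this forces $\beta$ and $x\alpha$ to be distinct, non-orthogonal roots of equal length, so they generate a subsystem of type $A_2$ and $\langle x\alpha, \beta^\vee \rangle = 1$. Hence $s_\beta x\alpha = x\alpha - \beta$.

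The main step is to transport the membership conditions through $w := (xs)^{-1} = sx^{-1}$. Because $\alpha$ is simple, $w(x\alpha) = s\alpha = -\alpha$, so $w(s_\beta x\alpha) = w(x\alpha) - w\beta = -\alpha - w\beta$. From $\beta \in \Delta((xs)^{-1})$ we get $w\beta < 0$, i.e. $\delta := -w\beta$ is a positive root, and moreover $\delta \neq \alpha$, since $\delta = \alpha$ is equivalent to $\beta = x\alpha$, which is excluded. Now $w(s_\beta x\alpha) = \delta - \alpha$ is a root, being $w$ applied to the root $s_\beta x\alpha$; and subtracting the simple root $\alpha$ from a positive root $\delta \neq \alpha$ yields, whenever the result is a root, a positive root (the only way a coordinate in the simple-root basis could become negative is if $\delta$ were a multiple of $\alpha$, forcing $\delta = \alpha$). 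Thus $w(s_\beta x\alpha) > 0$, contradicting $s_\beta x\alpha \in \Delta((xs)^{-1})$, which would require $w(s_\beta x\alpha) < 0$. Therefore $\cS_{w_{x\alpha}}^2 = \{\}$.

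I do not expect a serious obstacle; this is essentially a six-line argument resting on the single identity $w(x\alpha) = -\alpha$ together with the positivity supplied by Lemma \ref{PositivityLemma}. The one point that genuinely needs care is the positivity of $\delta - \alpha$, and that is exactly where the hypothesis $\beta \neq x\alpha$ — not merely $\beta \in \Delta(w_{x\alpha}^{-1}) \subseteq \Delta((xs)^{-1})$ — is used, through the exclusion $\delta \neq \alpha$.
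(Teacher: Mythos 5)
Your argument is correct and is essentially the paper's own proof: both reduce to the identity $sx^{-1}(s_\beta x\alpha) = -\alpha - sx^{-1}\beta$ via Lemma \ref{PositivityLemma} and the $A_2$ computation $s_\beta x\alpha = x\alpha - \beta$, then conclude positivity to contradict $s_\beta x\alpha \in \Delta((xs)^{-1})$. Your extra care with the case $\delta = \alpha$ (equivalently $\beta = x\alpha$) just makes explicit a point the paper leaves implicit.
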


\begin{definition}
Let $\cS_{w_{x\alpha}}^3 = \lbrace \beta \in \Delta(w_{x\alpha}^{-1} ) \, : 
\, |\beta| \neq |x\alpha| \text{ and } \beta, -s_\beta s_{x\alpha} \beta 
\in \Delta( (xs)^{-1} ) \rbrace$.  (See the third term in the formula 
for $\varepsilon$.)
\end{definition}

\begin{lemma} \label{PairingLemma3}
Suppose $w_{x\alpha}$ is constructed as in Lemma \ref{ContainmentLemma} and 
that $x\alpha$ does not form a type $G_2$ root system with any other roots.  
Then $\cS_{w_{x\alpha}}^3 = \{\}$.
\begin{proof}
Suppose $\beta \in \cS_{w_{x\alpha}}^3$.  Since $x\alpha$ and $\beta$ generate 
a type $B_2$ root system with $(x\alpha, \beta) > 0$, therefore  either
$-s_\beta s_{x\alpha} \beta = x\alpha - \beta$ if $x\alpha$ is long, or 
$-s_\beta s_{x\alpha} \beta = 2x\alpha - \beta$ if $x\alpha$ is short.
In the former case, arguing as in the proof of the previous lemma, we may 
see that it is impossible for both $\beta, -s_\beta s_{x\alpha} \beta \in 
\Delta( (xs)^{-1})$ to be simultaneously satisfied, contradicting $\beta \in 
\cS_{w_{x\alpha}}^3$.
In the latter case, observe that $\beta,  -s_\beta s_{x\alpha} \beta \in 
\Delta( (xs)^{-1})$ yield the formulas $sx^{-1} \beta < 0$ and 
$sx^{-1}( 2x\alpha - \beta ) = -2 \alpha - sx^{-1} \beta < 0$.  Noting that 
$\alpha$ is short and simple and $-sx^{-1} \beta$ is long and positive, using 
standard constructions of root systems such as those on p. 64 of \cite{H2} it 
is apparent that a positive long root minus twice a short simple root 
cannot give a negative root--contradiction.  Therefore $\cS_{w_{x\alpha}}^3 = 
\{\}$.
\end{proof}
\end{lemma}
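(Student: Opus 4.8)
The plan is to argue by contradiction, in the spirit of Lemma \ref{PairingLemma2}. Suppose $\beta \in \cS_{w_{x\alpha}}^3$. First I would record the structural inputs. By Lemma \ref{ContainmentLemma} I may take $w_{x\alpha}$ with $\Delta(w_{x\alpha}^{-1}) \subset \Delta((xs)^{-1})$, so the condition $\beta \in \Delta((xs)^{-1})$ comes for free and the only binding requirement from membership in $\cS_{w_{x\alpha}}^3$ is $-s_\beta s_{x\alpha}\beta \in \Delta((xs)^{-1})$. Since $(xs)^{-1}=sx^{-1}$, we have $\Delta((xs)^{-1}) = \{\delta \in \Delta^+ : sx^{-1}\delta < 0\}$ and $sx^{-1}(x\alpha) = s\alpha = -\alpha$, so every membership test reduces to comparing a positive root with the simple root $\alpha$. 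By Lemma \ref{PositivityLemma}, $(\beta, x\alpha) > 0$, so $\beta$ and $x\alpha$ are not orthogonal; together with $|\beta| \neq |x\alpha|$ and the hypothesis that $x\alpha$ lies in no $G_2$ subsystem, this forces $\beta$ and $x\alpha$ to span a rank-two subsystem of type $B_2$ (in which exactly one of them is long).

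Next I would pin down $-s_\beta s_{x\alpha}\beta$ inside that $B_2$. A short computation with Cartan integers (or in the standard model with short roots $\pm e_i$ and long roots $\pm e_1 \pm e_2$), using $\beta > 0$ and $(\beta, x\alpha) > 0$, gives $-s_\beta s_{x\alpha}\beta = x\alpha - \beta$ when $x\alpha$ is long and $-s_\beta s_{x\alpha}\beta = 2x\alpha - \beta$ when $x\alpha$ is short; in either case this vector is again a root, hence so is its image under $sx^{-1}$.

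Now I would split into the two cases. If $x\alpha$ is long, the argument is verbatim that of Lemma \ref{PairingLemma2}: $sx^{-1}(x\alpha - \beta) = -\alpha - sx^{-1}\beta = (-sx^{-1}\beta) - \alpha$, and since $\beta \in \Delta((xs)^{-1})$ forces $sx^{-1}\beta < 0$, the root $-sx^{-1}\beta$ is positive; it is not equal to $\alpha$ (its length equals $|\beta|\neq|\alpha|$), and a positive root distinct from a simple root, minus that simple root, when it is a root, is positive — so $x\alpha - \beta \notin \Delta((xs)^{-1})$, a contradiction. If $x\alpha$ is short, set $\delta := -sx^{-1}\beta$; then $\delta$ is a positive \emph{long} root (Weyl elements preserve length) while $\alpha$ is a \emph{short} simple root, and $sx^{-1}(2x\alpha - \beta) = -2\alpha - sx^{-1}\beta = \delta - 2\alpha$ is a root. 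The absence of a $G_2$ subsystem — which transports from the pair $\beta, x\alpha$ to the pair $\delta, \alpha$ via the Weyl element $xs = (sx^{-1})^{-1}$, sending $\delta \mapsto -\beta$ and $\alpha \mapsto -x\alpha$ — bounds the $\alpha$-string through $\delta$ to at most three roots. Hence $\delta - 2\alpha$ being a root forces the string to be $\{\delta, \delta-\alpha, \delta-2\alpha\}$, so $(\delta,\alpha^\vee) = 2$ and $\delta - 2\alpha = s_\alpha\delta$, which is positive since $\delta$ is a positive root $\neq \alpha$. Thus $2x\alpha - \beta \notin \Delta((xs)^{-1})$ — again a contradiction. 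Therefore no such $\beta$ exists and $\cS_{w_{x\alpha}}^3 = \{\}$.

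I expect the short-$x\alpha$ case to be the only real obstacle: the long case merely recycles Lemma \ref{PairingLemma2}, whereas the short case genuinely needs the root-string length bound (equivalently, the $G_2$-exclusion hypothesis) to rule out the \emph{a priori} possibility that $\delta - 2\alpha$ is a negative root not of the form $s_\alpha\delta$. A more computational but perhaps cleaner alternative for that step is to invoke an explicit type-$B$ or type-$C$ model and simply observe that a positive long root minus twice a short simple root is never negative.
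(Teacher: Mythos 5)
Your proposal is correct and follows essentially the same route as the paper: reduce to a $B_2$ subsystem via Lemmas \ref{ContainmentLemma} and \ref{PositivityLemma}, identify $-s_\beta s_{x\alpha}\beta$ as $x\alpha-\beta$ or $2x\alpha-\beta$ according to whether $x\alpha$ is long or short, dispose of the long case exactly as in Lemma \ref{PairingLemma2}, and derive the contradiction in the short case from $\delta-2\alpha<0$ where $\delta=-sx^{-1}\beta$ is a positive long root and $\alpha$ is a short simple root. The only difference is cosmetic: where the paper justifies the short case by inspecting standard explicit models of the root systems, you use the root-string bound to conclude $\delta-2\alpha=s_\alpha\delta>0$ (and you note the explicit-model alternative yourself), which is if anything more self-contained.
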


We arrive at the following:
\begin{theorem} \label{EpsilonValues}
Let $x \in W$ and let $\alpha$ be a simple root with simple reflection $s$ 
such that $xs > x$.  Then:
\begin{enumerate}
\item If $H_{\alpha,N}$ intersects $w \cC_0$, then 
	$\varepsilon( H_{\alpha,N}, w ) = 1$ if $\alpha$ is compact, and 
	$(-1)^N$ if $\alpha$ is noncompact.
\item $\varepsilon( H_{x\alpha,N}, xs ) = (-1)^{\epsilon(N x\alpha)}$.
\end{enumerate}
\begin{proof}
(1):  This is just Lemma 5.2.17 of \cite{Y3}. \\
(2):  First, we see from Theorem \ref{wgammadefn} that the result holds for 
$\alpha$ which forms a type $G_2$ root system with other roots, so it suffices 
to prove the theorem for other settings.
From our previous two lemmas, it suffices to prove that 
if $w_{x\alpha} = s_{i_1} \cdots s_{i_k}$,  then
$\#\{ \text{noncompact } \alpha_{i_j} : | \alpha_{i_j}| \geq |x\alpha| \} 
\equiv \epsilon( x\alpha ) \pmod{2}$.
We prove this result by induction on $k$.  Observe first that the set we count 
makes no reference to the Weyl chamber.  Clearly if $k=1$, then $x\alpha$ 
is simple and the result follows immediately.  Otherwise, suppose $k>1$ and
let $\beta = s_{i_2} \cdots s_{i_{k-1}} \alpha_{i_k}$.  We see that we may 
select $w_\beta = s_{i_2} s_{i_3} \cdots s_{i_k}$.  We may assume by induction 
that 
$\#\{ \text{noncompact } \alpha_{i_j} : | \alpha_{i_j}| \geq |\beta|, 2 \leq j 
\leq k \} = \varepsilon( \beta )$.  Observe that $|\beta| = |x\alpha| = 
|\alpha_{i_k}|$.  If $|\alpha_{i_1}| < |x\alpha| = |\beta|$, then 
$2\frac{(\beta,\alpha_{i_1})}{(\alpha_{i_1},\alpha_{i_1})}$ is even (recall 
we already settled the case of type $G_2$ roots), so 
$\epsilon( x\alpha ) =  \epsilon( s_{i_1} \beta ) = \epsilon( \beta )$.
The sets we count for $x\alpha$ and for $\beta$ are the same, so the result 
holds for $x\alpha$.  If $|\alpha_{i_1}| \geq |x\alpha| = |\beta|$, then 
$x \alpha = s_{i_1} \beta = \beta + \alpha_{i_1}$, from which the result 
follows immediately for $x\alpha$.
\end{proof}
\end{theorem}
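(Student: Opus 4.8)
The plan is to handle the two parts separately. Part (1) is nothing more than the simple-root case already recorded as Lemma~5.2.17 of \cite{Y3}: the sign attached to a simple-root hyperplane $H_{\alpha,N}$ on a Weyl chamber $w\cC_0$ that it actually meets is $1$ for compact $\alpha$ and $(-1)^N$ for noncompact $\alpha$, so I would simply quote it. The substance is part (2), and there the strategy is to feed $\gamma = x\alpha$ and the Weyl group element $xs$ into Theorem~\ref{wgammadefn} (note $x\alpha>0$ since $xs>x$, so $H_{x\alpha,N}$ is a genuine reducibility hyperplane and the formula applies), and then to show that the last two of the three sign factors are trivial while the first one is exactly $(-1)^{\epsilon(Nx\alpha)}$.

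First I would clear away the case in which $x\alpha$ generates a type $G_2$ subsystem: the values of $\varepsilon(H_{\gamma,N},s)$ are listed explicitly in the table of Theorem~\ref{wgammadefn}, and one checks row by row that each entry $\pm\delta_\gamma^{\,N}$ agrees with $(-1)^{\epsilon(N\gamma)}$ for the relevant root $\gamma$. For the remaining (non-$G_2$) case the crucial device is the choice of the word $w_{x\alpha}=s_{i_1}\cdots s_{i_k}$: by Lemma~\ref{ContainmentLemma} I would pick one with $\Delta(w_{x\alpha}^{-1})\subset\Delta((xs)^{-1})$. With that choice in hand, Lemma~\ref{PairingLemma2} gives $\cS^2_{w_{x\alpha}}=\{\}$ and Lemma~\ref{PairingLemma3} gives $\cS^3_{w_{x\alpha}}=\{\}$, so the second and third factors in $\varepsilon(H_{x\alpha,N},xs)$ equal $1$. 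It then remains only to analyze the first factor, i.e.\ to prove
$$\#\{\,\text{noncompact }\alpha_{i_j}:|\alpha_{i_j}|\geq|x\alpha|\,\}\equiv\epsilon(x\alpha)\pmod 2,$$
since $\bbZ_2$-linearity of $\epsilon$ then upgrades this to the claimed congruence carrying the extra factor of $N$.

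I would prove this congruence by induction on $k=\ell(w_{x\alpha})$, noting that the set being counted depends only on $x\alpha$ and the chosen word, not on the Weyl chamber. The base case $k=1$ is immediate since then $x\alpha=\alpha_{i_1}$ is simple. For $k>1$, put $\beta=s_{i_2}\cdots s_{i_{k-1}}\alpha_{i_k}$; then $s_{i_2}\cdots s_{i_k}$ may be taken as $w_\beta$, the height-decreasing property of $w_{x\alpha}$ forces $x\alpha=s_{i_1}\beta$ with $ht(x\alpha)>ht(\beta)$, and $|\beta|=|\alpha_{i_k}|=|x\alpha|$. Split into two cases according to $|\alpha_{i_1}|$. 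If $|\alpha_{i_1}|<|x\alpha|$, then (this is exactly where the absence of $G_2$ is used) the Cartan integer $\langle\beta,\alpha_{i_1}^\vee\rangle$ is even, so $x\alpha-\beta\in 2\bbZ\,\alpha_{i_1}$ and $\epsilon(x\alpha)=\epsilon(\beta)$, while $\alpha_{i_1}$ is too short to enter either count, so the counted sets for $x\alpha$ and $\beta$ coincide and the induction hypothesis closes the case. If $|\alpha_{i_1}|\geq|x\alpha|$, then a rank-$2$ argument forces $\langle\beta,\alpha_{i_1}^\vee\rangle=-1$, hence $x\alpha=\beta+\alpha_{i_1}$; thus $\epsilon(x\alpha)=\epsilon(\beta)+\epsilon(\alpha_{i_1})$, and the counted set for $x\alpha$ is the one for $\beta$ together with $\alpha_{i_1}$ precisely when $\alpha_{i_1}$ is noncompact, so the two parities again agree.

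The main obstacle is not this final induction, which is short, but the supporting machinery that makes it applicable: producing $w_{x\alpha}$ with $\Delta(w_{x\alpha}^{-1})\subset\Delta((xs)^{-1})$ via the double induction of Lemma~\ref{ContainmentLemma} (on $\ell(x)$ and on $ht(x\alpha)$, relying on the reflection-sequence description of $\Delta(w^{-1})$ and on Lemma~5.3.3 of \cite{Y3}), and then extracting the vanishing of $\cS^2$ and $\cS^3$ from the positivity input of Lemma~\ref{PositivityLemma} together with a case analysis of the rank-$2$ subsystem generated by $x\alpha$ and a hypothetical $\beta$ — the point being that $sx^{-1}$ sends $x\alpha$ to $-\alpha$, so $s_\beta x\alpha$ (resp.\ $-s_\beta s_{x\alpha}\beta$) is forced to remain positive under $sx^{-1}$ and cannot lie in $\Delta((xs)^{-1})$. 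Throughout, the $G_2$ exclusion must be tracked deliberately, since that is the one place where a Cartan integer can be odd and the first-factor argument would otherwise fail.
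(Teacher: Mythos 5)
Your proposal matches the paper's proof essentially step for step: part (1) is quoted from Lemma 5.2.17 of \cite{Y3}, the $G_2$ case is disposed of via the table in Theorem \ref{wgammadefn}, Lemmas \ref{ContainmentLemma}, \ref{PairingLemma2}, and \ref{PairingLemma3} kill the second and third sign factors, and the remaining congruence for the first factor is proved by the same induction on $k$ with the same case split on $|\alpha_{i_1}|$ versus $|x\alpha|$. The argument is correct and takes the same route; your added detail on why $x\alpha = \beta + \alpha_{i_1}$ in the long-$\alpha_{i_1}$ case is a slight elaboration of what the paper leaves implicit.
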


\section{Comparing Signed Kazhdan-Lusztig Polynomials and Classical 
Kazhdan-Lusztig Polynomials}
We will use the following notation in this section:
\begin{notation}
\begin{itemize}
\item[-] For $\lambda \in \h^*$, $W_\lambda$ is the integral Weyl group and 
	$w_\lambda^0$ its long element
\item[-] $\Delta_\lambda := \{ \alpha \in \Delta( \g, \h ): (\lambda,
\alpha^\vee) \in \bbZ \}$.  It is a root system with Weyl group $W_\lambda$.
\item[-] $\Pi_\lambda$ is a set of simple roots for $\Delta_\lambda$ determined 
by $\rho$
\end{itemize}
\end{notation}

We begin by recalling the relationship between classical Kazhdan-Lusztig 
polynomials and the Jantzen filtration.

\begin{definition} \label{JantzenFiltration}
Given an analytic family of invariant Hermitian forms $\left< \cdot, \cdot 
\right>_t$ on a finite-dimensional vector space $V$, where $t \in (-\delta,
\delta)$ and the forms are non-degenerate for $t \neq 0$, the Jantzen 
filtration is defined to be 
$$V=V^{\left< 0 \right>} \supset V^{\left< 1 \right>} \supset \cdots
\supset V^{\left< N \right>} = \{ 0 \}$$
where $v \in V^{\left < n \right>}$ if there exists an analytic map 
$\gamma_v : (-\epsilon, \epsilon) \to V$ for some small $\varepsilon > 0$
such that:
\begin{enumerate}
\item $\gamma_v(0) = v$, and
\item for every $u \in V$, as $t$ approaches $0$, 
$\left< \gamma_v(t), u \right>_t$ vanishes at least to order $n$.
\end{enumerate}
There is a natural invariant Hermitian form on $V^{\left< j \right>}$ with 
radical $V^{\left< j+1 \right>}$:
$$\left < u, v \right >^j = \lim_{t \to 0^+} \frac{1}{t^j}
\left< \gamma_u(t), \gamma_v(t) \right>_t \qquad \forall \, u, v \in V$$
which descends naturally to a non-degenerate invariant Hermitian form 
$\left< \cdot, \cdot \right>_j$ on $V^{\left< j \right>} / V^{\left< j+1 
\right>}$.
\end{definition}

\begin{lemma} \label{ChangeLemma}
(\cite{V}, Proposition 3.3)  Using the notation of the previous
definition, let $(p_j, q_j)$ be the signature of $\left< \cdot,\cdot \right>_j$.
Then:
\begin{eqnarray*}
\text{For small } t > 0, \text{ the signature is } && \left( \sum_j p_j, \sum_j 
q_j \right )  \\
\text{For small } t < 0, \text{ the signature is } && \left( \sum_{j
\text{ even}} p_j + \sum_{j \text{ odd}} q_j, \sum_{j \text{ odd}} p_j +
\sum_{j \text{ even}} q_j \right)
\end{eqnarray*}
\end{lemma}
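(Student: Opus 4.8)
The plan is to reduce the Gram matrix of the family to a normal form that simultaneously exhibits the Jantzen filtration and both signatures, after which the statement becomes a sign count. First I would fix a real-analytic basis $e_1,\dots,e_n$ of $V$ and let $A(t)$ be the Hermitian Gram matrix with entries $\langle e_i,e_j\rangle_t$; it is analytic in $t$ and non-degenerate for $0<|t|<\delta$. Working over the ring of germs at $0$ of analytic functions of the real parameter $t$ --- a discrete valuation ring with uniformizer $t$ --- I would run symmetric Gaussian elimination: at each stage factor out the least order of vanishing occurring among the entries, apply a constant change of basis bringing a unit onto the diagonal (possible since a nonzero Hermitian matrix admits a non-isotropic vector), clear that row and column by an analytic unipotent change of basis, and recurse on the Schur complement. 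This yields an analytic $P(t)$ with $P(0)$ invertible such that
\[
P(t)^{*}A(t)P(t)=\operatorname{diag}\bigl(t^{d_1}u_1(t),\dots,t^{d_n}u_n(t)\bigr),
\]
with $d_i\in\bbZ_{\ge 0}$, $u_i$ analytic and $u_i(0)\ne 0$; non-degeneracy for $t\ne 0$ forces the procedure to terminate with every $d_i$ finite. Set $f_i(t)=P(t)e_i$, an analytic basis of $V$ for each small $t$, so that $\langle f_i(t),f_j(t)\rangle_t=\delta_{ij}\,t^{d_i}u_i(t)$.

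Next I would translate the Jantzen data into this language. Since the coefficients of any fixed vector in the basis $\{f_k(t)\}$ are analytic near $0$, one gets $\langle f_i(t),u\rangle_t=O(t^{d_i})$ for every $u\in V$, so $f_i(t)$ is an admissible lift exhibiting $f_i(0)\in V^{\langle j\rangle}$ whenever $d_i\ge j$. Conversely, if $v=\sum_i a_i f_i(0)\in V^{\langle j\rangle}$ has an admissible lift $\gamma_v(t)=\sum_i c_i(t)f_i(t)$, then $\langle\gamma_v(t),f_k(t)\rangle_t=\overline{c_k(t)}\,t^{d_k}u_k(t)=O(t^j)$ forces $\operatorname{ord}_t c_k\ge j-d_k$, hence $a_k=c_k(0)=0$ whenever $d_k<j$. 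Therefore $V^{\langle j\rangle}=\operatorname{span}\{f_i(0):d_i\ge j\}$ and $\{\overline{f_i(0)}:d_i=j\}$ is a basis of $V^{\langle j\rangle}/V^{\langle j+1\rangle}$; computing the descended form of Definition \ref{JantzenFiltration} in this basis gives $\langle\overline{f_i(0)},\overline{f_k(0)}\rangle_j=\lim_{t\to 0^{+}}t^{-j}\delta_{ik}t^{j}u_i(t)=\delta_{ik}u_i(0)$. Thus $p_j=\#\{i:d_i=j,\ u_i(0)>0\}$ and $q_j=\#\{i:d_i=j,\ u_i(0)<0\}$.

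Finally I would read off the signatures of $A(t)$ from its congruence to $\operatorname{diag}(t^{d_i}u_i(t))$. For small $t>0$ the $i$-th diagonal sign is $\sgn u_i(0)$, so the signature is $\bigl(\#\{i:u_i(0)>0\},\ \#\{i:u_i(0)<0\}\bigr)=\bigl(\sum_j p_j,\ \sum_j q_j\bigr)$. For small $t<0$ the $i$-th sign is $(-1)^{d_i}\sgn u_i(0)$, so the positive indices are those with $d_i$ even and $u_i(0)>0$ together with those with $d_i$ odd and $u_i(0)<0$, and dually for the negative ones, giving exactly $\bigl(\sum_{j\text{ even}}p_j+\sum_{j\text{ odd}}q_j,\ \sum_{j\text{ odd}}p_j+\sum_{j\text{ even}}q_j\bigr)$, as claimed. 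The genuine content is the middle step --- verifying that the normal-form exponents $d_i$ recover the dimensions of the Jantzen layers and that the units $u_i(0)$ recover their signatures $(p_j,q_j)$; the elimination is routine linear algebra over a discrete valuation ring, and once this dictionary is in place the two sign counts are immediate.
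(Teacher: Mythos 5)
Your argument is correct, but note that the paper offers no proof of this lemma at all --- it is quoted verbatim from the cited reference (Vogan, Proposition 3.3), where it is established by exactly the route you take: diagonalize the analytic family of Hermitian forms over the discrete valuation ring of germs, identify the Jantzen layers with the exponents $d_i$ and the layer signatures with the signs of the units $u_i(0)$, and then count signs of $t^{d_i}u_i(t)$ for $t>0$ versus $t<0$. The only point worth tightening is in your converse inclusion: the definition of $V^{\langle j\rangle}$ tests $\langle\gamma_v(t),u\rangle_t$ against \emph{fixed} $u\in V$, whereas you pair against the moving vectors $f_k(t)$; this is harmless (expand $f_k(t)-f_k(0)$ in its Taylor series and use that each fixed coefficient vector pairs to order $\geq j$), but it deserves a line of justification.
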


Since Verma modules $M(\lambda)$ may be viewed as all being realized on the 
same vector space $U(\mfn^-)$ and the weight space decomposition is an 
orthogonal decomposition under the Shapovalov form, analytic paths in the real 
subspace of $\h^*$ of imaginary weights and the Shapovalov forms on the 
corresponding Verma modules give rise to the Jantzen filtration on a given 
Verma module.  Let $\lambda$ be antidominant and let $x \in W_\lambda$, the 
integral Weyl group.  We consider Verma modules  of the form $M( x \lambda )$. 
It is well-known that 
the $j^{\text th}$ level of the Jantzen filtration of $M(x \lambda )$ does not 
depend on the choice of analytic path (proved by Barbasch in \cite{Ba}).  
Furthermore, the $j^{\text th}$ level of the Jantzen filtration, 
 $M(x\lambda)_{\left< j \right>} := M(x\lambda)^{\left<j \right>} / 
 M(x\lambda)^{\left< j+1 \right>}$ is semisimple and a direct sum of modules 
of the form $L( y \lambda )$ where $y \in W_\lambda$ and $y \leq x$.  The 
Jantzen Conjecture states that the multiplicity of any particular irreducible 
highest weight module in the $j^{\text{th}}$ level of the Jantzen filtration 
may be determined by classical Kazhdan-Lusztig polynomials:

\begin{theorem} (\cite{BB}) Jantzen's Conjecture:
Let $\lambda$ be antidominant, $x, y \in W_\lambda$. Then:
$$[ M( x \lambda )_{\left< j \right>} : L( y \lambda )] = \text{coefficient of }
q^{(\ell(x) - \ell(y) - j)/2} \text{ in } P_{w_\lambda^0 x, w_\lambda^0 y}(q).$$
\end{theorem}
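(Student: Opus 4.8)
The plan is to follow the geometric strategy of Beilinson--Bernstein, since the asserted identity is exactly the Jantzen conjecture in the form that ties the Jantzen filtration to Kazhdan--Lusztig combinatorics. First I would reduce to the case where $\lambda$ is regular: a singular or non-integral antidominant $\lambda$ is handled by translating to a regular integral weight, and one checks that translation functors are compatible with Jantzen filtrations -- the Jantzen filtration of $M(x\lambda)$ is obtained from that of the corresponding regular object by the usual truncation over $W_\lambda$-cosets -- so that both sides of the claimed equality transform in the same way under translation. This reduces the theorem to blocks of category $\mathcal{O}$ with regular integral infinitesimal character, where $\Delta_\lambda = \Delta(\g,\h)$, $W_\lambda = W$, and $w_\lambda^0 = w_0$.

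Next I would pass to geometry via Beilinson--Bernstein localization, which identifies such a block with a category of twisted $D$-modules on the flag variety $G/B$, sending $M(x\lambda)$ to the standard object (direct image of the structure sheaf of the Schubert cell indexed by $x$) and $L(y\lambda)$ to the corresponding $\mathrm{IC}$-module. The crucial step is then to realize the Jantzen filtration on the standard $D$-module as a geometrically defined filtration: deform $\lambda$ along a generic line through the singular point, lift the deformation to a one-parameter family of $D$-modules over a punctured disk, and identify $M(x\lambda)^{\langle j\rangle}$ with the $j$-th step of the monodromy (weight) filtration on the nearby cycles, i.e.\ the specialization, of that family. That this is independent of the chosen line is precisely Barbasch's theorem quoted above.

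The heart of the argument is to compute that weight filtration, and here I would invoke the theory of mixed Hodge modules (or the $\ell$-adic weight formalism of Beilinson--Bernstein--Deligne--Gabber): the standard $D$-module carries a natural mixed structure whose weight filtration has associated graded a direct sum of $\mathrm{IC}$-modules, and the multiplicity of the $\mathrm{IC}$-module indexed by $y$ in weight $\ell(x)-j$ is, by the Hodge-theoretic proof of the Kazhdan--Lusztig conjecture, the coefficient of $q^{(\ell(x)-\ell(y)-j)/2}$ in $P_{w_0 x,\, w_0 y}(q)$. Matching the two filtrations -- Jantzen's, defined analytically through the vanishing orders of the Shapovalov form, and the Hodge/monodromy one, defined geometrically -- is the main obstacle; this comparison is the technical core of \cite{BB}, and it rests on the compatibility of localization with the chosen deformation together with the purity of the $\mathrm{IC}$-modules. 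Reconciling the indexing (weight $\ell(x)-j$ versus Jantzen level $j$, and the normalization placing the relevant $q$-degree at $(\ell(x)-\ell(y)-j)/2$) then gives the stated formula, and undoing the reduction of the first step recovers the general antidominant $\lambda$.
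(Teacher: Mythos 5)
The paper does not actually prove this statement: it is imported verbatim from Beilinson--Bernstein \cite{BB}, so there is no internal argument to compare yours against. Your outline is a faithful summary of the strategy of that cited reference --- reduction to regular integral infinitesimal character via translation functors, Beilinson--Bernstein localization sending $M(x\lambda)$ to the standard $D$-module and $L(y\lambda)$ to the IC object, identification of the Jantzen filtration with the monodromy weight filtration on nearby cycles along a deformation of the twist, and pointwise purity giving the Kazhdan--Lusztig coefficients in the stated degrees. As a proof attempt, however, it is a roadmap rather than a proof: the decisive step, matching Jantzen's analytically defined filtration (vanishing orders of the Shapovalov form) with the geometrically defined weight filtration, is essentially the entire content of the theorem, and you explicitly defer it to the very reference \cite{BB} that the statement cites. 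Within the context of this paper that is acceptable --- the author treats the result as a known input --- but you should be clear that nothing in your sketch constitutes an independent verification of that comparison.
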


While the vectors in the $j^{\text th}$ level of the Jantzen filtration of 
$M(x \lambda )$ are independent of the choice of analytic path, the signature 
of $\left< \cdot, \cdot \right>_j$ on $M(x\lambda)_{\left< j\right>}$ is not.  
For example, combining Lemma \ref{WallCrossing} and Lemma \ref{ChangeLemma}, 
one observes that the signature of $\left< \cdot, \cdot \right>_1$ depends on 
the direction of the analytic path.

For the purpose of studying signatures, rather than recording multiplicities 
in the $j^{\text{th}}$ level of the Jantzen filtration, contributions by all 
$L(y \lambda )$'s to the 
signature of $\left< \cdot, \cdot \right>_j$ are recorded for a particular 
filtration direction in signed Kazhdan-Lusztig polynomials:
\begin{definition} (\cite{Y})
Let  $\lambda$ be antidominant and let $x, y \in W_\lambda$.  Consider the 
invariant Hermitian forms $\left< \cdot, \cdot \right>_j$ on the various levels 
of the Jantzen filtration of $M(x \lambda )$ arising from an analytic path 
whose direction as $t\to 0^+$ is $\delta$.  If
$$ ch_s \left< \cdot, \cdot \right>_j = \sum_{y \leq x} 
a^{\lambda,\delta}_{w_\lambda^0 x, w_\lambda^0 y, j} ch_s L( y \lambda )$$
where by $ch_s L( y \lambda )$ we mean the signature of the Shapovalov form, 
then the value of $a^{\lambda,\delta}_{w_\lambda^0 x, w_\lambda^0 y, j}$
is the same for all $\delta$ in the interior of the same Weyl chamber.  We 
use the notation $a^{\lambda,\delta}_{w_\lambda^0 x, w_\lambda^0 y, j}$ and 
$a^{\lambda,w}_{w_\lambda^0 x, w_\lambda^0 y, j}$ where $w \in W$ with 
$\delta \in w \mathfrak{C}_0$ interchangeably without further comment. 
Signed Kazhdan-Lusztig polynomials are defined by:
$$ P^{\lambda,w}_{w_\lambda^0 x, w_\lambda^0 y} ( q ) := \sum_{j \geq 0}
a^{\lambda,w}_{w_\lambda^0 x, w_\lambda^0 y} q^{\frac{\ell(x) - \ell(y) 
- j}{2}}.$$
\end{definition}
Note that for small $t > 0$, recalling that $-\rho \in \mathfrak{C}_0$, we 
have by Lemma \ref{ChangeLemma}:
$$ e^{w\rho t} ch_s M( x \lambda + w(-\rho)t ) = \sum_{y \leq x} P^{\lambda, 
w}_{w_\lambda^0 x, w_\lambda^0 y}(1) ch_s L( y \lambda ).$$
The left side is known by work in \cite{Y3}.  We would like a formula for 
$ch_s \, L(x \lambda )$, which requires inversion.
In \cite{Y2}, we will show a simple inversion formula which expresses 
$ch_s L(x \lambda )$ as a linear combination of 
$ch_s M(y \lambda + w_\lambda^0(-\rho)t )$. It vastly improves 
the inversion formula found in \cite{Y}.

The simple inversion formula is related to the main theorem of this paper, 
which we now proceed to state and prove:
\begin{theorem}
Let $\lambda$ be antidominant, and let $x, y \in W_\lambda$.  Then 
signed Kazhdan-Lusztig polynomials are related to classical 
Kazhdan-Lusztig polynomials by:
$$ P^{\lambda, w_\lambda^0}_{x,y}(q) = (-1)^{\epsilon(x \lambda - y \lambda )}
P_{x,y}(-q)$$
where $\epsilon$ is the $\mathbb{Z}_2$-grading on the (imaginary) root lattice.
Specifically, $\epsilon( \mu )$ is the parity of the number of non-compact 
roots in an expression for $\mu$ as an integral linear combination of roots.
\end{theorem}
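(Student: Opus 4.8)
The plan is to prove the identity by induction on the length $\ell(x)$, running in parallel a recursion for $ch_s\,M(x\lambda + w_\lambda^0(-\rho)t)$ (for small $t>0$) and the classical recursion for Kazhdan--Lusztig polynomials, and matching them term by term. First I would set up the base case: when $x = e$, the module $M(\lambda)$ lies (after the small push in the antidominant direction $w_\lambda^0(-\rho)$) in the region where Theorem (Wallach, \cite{W}, Lemma 2.3) applies, so $ch_s\,M$ is given by $R(\lambda)$, which is exactly $ch_s\,L(\lambda)$; thus $P^{\lambda,w_\lambda^0}_{e,e}(q) = 1 = P_{e,e}(-q)$ and $\epsilon(\lambda-\lambda)=0$, as required. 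For the inductive step, fix $x$ with $xs > x$ for a simple reflection $s = s_\alpha$ in $W_\lambda$, and compare the alcove containing a point of $x\lambda + w_\lambda^0(-\rho)t$ with the one obtained by crossing the reducibility hyperplane $H_{x\alpha, N}$ separating it from the alcove associated to $xs$. Lemma \ref{WallCrossing} (wall-crossing) gives
$$ch_s\,M(xs\lambda + \cdots) = e^{\cdots}ch_s\,M(x\lambda+\cdots) + 2\,\varepsilon(H_{x\alpha,N}, xs)\, ch_s\,M(\,\cdot - N x\alpha\,),$$
and Theorem \ref{EpsilonValues}(2) identifies $\varepsilon(H_{x\alpha,N},xs) = (-1)^{\epsilon(Nx\alpha)}$. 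The shifted module $M(\cdot - Nx\alpha)$ is, up to the correct push, $M(x s x^{-1}\cdot x\lambda)$-type data; tracking weights shows it corresponds to $x\lambda$-as-a-"lower" vertex, and $Nx\alpha = xs\lambda - x\lambda$ so that $\epsilon(Nx\alpha) = \epsilon(xs\lambda - x\lambda)$.

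Next I would translate both sides into the signed-KL language via the relation displayed just before the theorem, namely $e^{w\rho t}ch_s\,M(z\lambda + w(-\rho)t) = \sum_{y\le z} P^{\lambda,w}_{z,y}(1)\, ch_s\,L(y\lambda)$ with $w = w_\lambda^0$, and more precisely its $q$-refined version obtained from the Jantzen filtration levels: the coefficient of $ch_s\,L(y\lambda)$ graded by Jantzen level produces $P^{\lambda,w_\lambda^0}_{x,y}(q)$. Expanding the wall-crossing identity in this basis and extracting the $ch_s\,L(y\lambda)$-coefficient yields a recursion of the shape
$$P^{\lambda,w_\lambda^0}_{xs,y}(q) = q^{c}\,P^{\lambda,w_\lambda^0}_{x,y}(q) \;+\; (-1)^{\epsilon(xs\lambda - x\lambda)}\Bigl(\text{terms from }P^{\lambda,w_\lambda^0}_{x,\cdot}\Bigr),$$
which I must match against the standard recursion $P_{xs,y} = q^{1-c}P_{x,y} + q^{c}P_{x,sy} - \sum \mu(\cdots)q^{\cdots}P_{\cdots}$ with $q \mapsto -q$. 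The key bookkeeping is that $\epsilon$ is a homomorphism on the root lattice (it is the $\bbZ_2$-grading), so $\epsilon(xs\lambda - y\lambda) = \epsilon(xs\lambda - x\lambda) + \epsilon(x\lambda - y\lambda)$, and the sign factors multiply correctly through each term of the recursion; the powers of $q$ pick up the expected signs from $q \mapsto -q$ since $\ell$ changes by $1$ at each step, matching the half-integer exponents $(\ell(x)-\ell(y)-j)/2$.

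The main obstacle I anticipate is making the recursion on the signature side genuinely match the full Kazhdan--Lusztig recursion, including the subtractive $\mu$-terms: the wall-crossing lemma gives the effect of crossing a \emph{single} reducibility hyperplane, but expressing $ch_s\,M(\cdot - Nx\alpha)$ back in the $ch_s\,L(y\lambda)$ basis reintroduces lower Verma modules whose signature characters must themselves be expanded, and one has to check that the induced cancellations reproduce exactly the $\sum_{z: sz<z}\mu(z,y)\,ch_s\,L$ corrections with the right signs. I would handle this by first establishing the parallel structural fact that the signed analogue of the Jantzen-filtration multiplicities also satisfies a Hecke-algebra-type recursion (essentially the $\bar{D}$-operator relations from \cite{Y}), then observing that the substitution $q \mapsto -q$ combined with the multiplicative sign $(-1)^{\epsilon(\cdot)}$ is an algebra automorphism intertwining the two recursions, so that uniqueness of the solution (degree bounds plus the $e,e$ normalization) forces the stated equality. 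A secondary technical point is confirming that the Jantzen-level grading on the signature side — which a priori depends on the filtration direction $w_\lambda^0$ — is exactly the grading that makes the $q$-exponents line up; this follows from Lemma \ref{ChangeLemma} together with the fact that for the antidominant direction $w_\lambda^0(-\rho)$ all signs $p_j$ occur with the uniform sign pattern, which is the ultimate source of the clean "evaluate at $-q$" phenomenon.
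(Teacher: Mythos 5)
Your proposal follows essentially the same route as the paper: induction on $x$, matching the recursion for signed Kazhdan--Lusztig polynomials (whose signs are supplied by Theorem \ref{EpsilonValues}(2) together with the additivity of $\epsilon$ on the root lattice, via $xs\lambda - x\lambda = (x\lambda, x\alpha^\vee)x\alpha$) against the classical recursion with $q \mapsto -q$. The one caveat is that the paper does not re-derive the $q$-graded, level-by-level recursion from the single-hyperplane wall-crossing Lemma \ref{WallCrossing} as you sketch --- that lemma only controls the first Jantzen level, and the full recursion with the $\mu$-terms is exactly Theorem 4.6.10 of \cite{Y} (your ``$\bar{D}$-operator relations''), which the paper imports and must first correct for sign errata before the induction closes.
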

\begin{proof}
We prove this theorem by induction and by comparing recursive formulas for 
computing classical and signed Kazhdan-Lusztig polynomials after substituting 
appropriate simplifications determined in this paper.

Classical Kazhdan-Lusztig polynomials may be computed using $P_{x,x}(q) = 1$,
$P_{x,y}(q) = 0$ if $x \not \leq y$, and the following recursive formulas 
where $s = s_\alpha$ where $\alpha \in \Pi_\lambda$:
\begin{itemize}
\item[a)] If $ys > y$ and $xs > x \geq y$ then:
 $$P_{w_\lambda^0 x,w_\lambda^0 y} = P_{w_\lambda^0 xs, w_\lambda^0 y}$$

\item[a')] If $sy > y$ and $sx > x \geq y$ then:
$$P_{w_\lambda^0 x,w_\lambda^0 y} = P_{w_\lambda^0 sx, w_\lambda^0 y}$$

\item[b)] If $y > ys$ and $x < xs$ then:
$$ P_{w_\lambda^0 xs,w_\lambda^0 y} + q P_{w_\lambda^0 x, w_\lambda^0 y}
= \sum_{z \in W_\lambda | z < zs} a_{w_\lambda^0 z, w_\lambda^0 y, 1}  \,
q^{\frac{\ell(z)-\ell(y)+1}{2}} P_{w_\lambda^0 x, w_\lambda^0 z} + 
P_{w_\lambda^0 x, w_\lambda^0 ys}.$$
\end{itemize}

Now we study recursive formulas for signed Kazhdan-Lusztig polynomials from 
\cite{Y}.  First, we must note some errata:
\begin{itemize}
\item[-]  The formula before Proposition 4.6.6 uses incorrect notation.
It should say:
\begin{eqnarray*}
ch_s U_\alpha \overline{M(z \lambda + \delta t)_0} &=& \sgn( \bar{c}_{zs}'' 
\bar{c}_{z}')
ch_s L(zs \lambda ) \\
&& + \sgn( \bar{c}_z''(\delta, z\alpha^\vee) \bar{c}_z')
\sum_{y \in W_\lambda | y > ys} a^{\lambda,w}_{w_\lambda^0 z, w_\lambda^0 y,
1} ch_s L(y \lambda )
\end{eqnarray*}
\item[-] Substituting the above into the $j^{\text{th}}$ level formula from 
Proposition 4.6.5 of \cite{Y}, the sign of the first term on the right hand 
side gives rise to the coefficient for the final term on the right hand 
side of Proposition 4.6.6.  Proposition 4.6.6 should actually state: \\
If $x, y \in W_\lambda$ are such that $x < xs$ and $y > ys$ and $x > ys$, 
then:
\begin{eqnarray*}
&&\sgn( \bar{c}_{xs}'' \bar{c}_x' ) P^{\lambda,w}_{w_\lambda^0 xs, w_\lambda^0 
y} (q) + \sgn( \bar{c}_x'' (\delta, x \alpha^\vee ) \bar{c}_x') q P^{\lambda,
w}_{w_\lambda^0 x, w_\lambda^0 y} (q) \\
&& = \sum_{z \in W_\lambda | z < zs} \sgn( \bar{c}_z'' (\delta, z \alpha^\vee)
\bar{c}_{z}') a^{\lambda,w}_{w_\lambda^0 z, w_\lambda^0 y,1} q^{\frac{ \ell(x)
- \ell(y) + 1}{2}} P^{\lambda,w}_{w_\lambda^0 x, w_\lambda^0 z} (q) \\
&& + \sgn( \bar{c}_y'' \bar{c}_{ys}') P^{\lambda,w}_{w_\lambda^0 x, w_\lambda^0 
ys} (q).
\end{eqnarray*}
Note that only the sign in the last term on the right side of the formula 
changed.
\item[-]   Due to the above correction, formula b) in Theorem 4.6.10 of 
\cite{Y} becomes:
\begin{eqnarray*}
&&-(-1)^{\epsilon((\lambda,\alpha^\vee)x\alpha)} P^{\lambda,w}_{w_\lambda^0 xs,
w_\lambda^0 y}(q) + \sgn( \delta, x\alpha^\vee ) q P^{\lambda,w}_{w_\lambda^0 x,
w_\lambda^0 y} (q) \\
\,\qquad &&= \sum_{z \in W_\lambda | z < zs} \sgn( \delta, z\alpha^\vee)
a^{\lambda,w}_{w_\lambda^0 z, w_\lambda^0 y, 1} q^{\frac{\ell(z)-\ell(y)+1}{2}}
P^{\lambda,w}_{w_\lambda^0 x, w_\lambda^0 z}(q) - (-1)^{\epsilon((\lambda,
\alpha^\vee) ys\alpha)} P^{\lambda,w}_{w_\lambda^0 x, w_\lambda^0 ys}(q).
\end{eqnarray*}
The final term in the right side of the formula changed to
$- (-1)^{\epsilon( (\lambda,\alpha^\vee)ys\alpha)} P^{\lambda,w_\lambda^0}_{
w_\lambda^0 x, w_\lambda^0 ys}(q)$.  Just as $\sgn( \bar{c}_{xs}'' \bar{c}_x') 
= (-1)^{\epsilon( (\lambda,\alpha^\vee)x\alpha)}$ 	for 
$x < xs$, so must $\sgn(\bar{c}_{y}''\bar{c}_{ys}') = 
(-1)^{\epsilon( (\lambda,\alpha^\vee)ys\alpha)}$ for $ys < y$.
\end{itemize}

Substituting Theorem \ref{EpsilonValues} into Theorem 4.6.10 of \cite{Y} with 
the erratum in the case $w = w_\lambda^0$ and 
using invariance of the inner product on $\h^*$ under the Weyl group, we 
see that signed Kazhdan-Lusztig polynomials may be computed using 
$P_{x,x}^{\lambda,w_\lambda^0}(q) = 1$, $P^{\lambda,w_\lambda^0}_{x,y}(q) = 0$ 
if $x \not \leq y$, and the following recursive formulas where $s = s_\alpha$ 
where $\alpha \in \Pi_\lambda$:
\begin{itemize}
\item[a)]  If $ys < y$ and $xs > x \geq y$ then:
$$P^{\lambda,w_\lambda^0}_{w_\lambda^0 x, w_\lambda^0 y}(q) = 
(-1)^{\epsilon( (\lambda,\alpha^\vee) x \alpha )} P^{\lambda,
w_\lambda^0}_{w_\lambda^0
xs, w_\lambda^0 y} (q) 
= (-1)^{\epsilon((x\lambda, x \alpha^\vee ) x \alpha)} P^{\lambda,w_\lambda^0}_{
w_\lambda^0 xs, w_\lambda^0 y}(q)$$

\item[a')] If $sy > y$ and $sx > x \geq y$ then:
$$P^{\lambda,w_\lambda^0}_{w_\lambda^0 x, w_\lambda^0 y}(q) = (-1)^{\epsilon(
(x\lambda, \alpha^\vee) \alpha)} P^{\lambda,w_\lambda^0}_{w_\lambda^0 sx, 
w_\lambda^0 y}(q)$$

\item[b)] If $y > ys$ and $x < xs$ then:
\begin{eqnarray*}
 - (-1)^{\epsilon(( x \lambda, x \alpha^\vee ) x \alpha)} P^{\lambda,
w_\lambda^0}_{ w_\lambda^0 xs, w_\lambda^0 y}(q) + q P^{\lambda,
w_\lambda^0}_{w_\lambda^0 x, 
w_\lambda^0 y} (q) &=& q \sum_{z \in W_\lambda | z < zs} a^{\lambda,
w_\lambda^0}_{w_\lambda^0 z, w_\lambda^0 y} q^{\frac{\ell(z) - \ell(y) - 1}{2}} 
P^{\lambda,w_\lambda^0}_{ w_\lambda^0 x,
w_\lambda^0 z}(q) \\
&& -(-1)^{\epsilon( (ys \lambda, ys \alpha^\vee) ys\alpha)}
P^{\lambda,w_\lambda^0}_{w_\lambda^0 x, w_\lambda^0 ys}(q).
\end{eqnarray*}
\end{itemize}
Observe that the theorem holds for $x=1$ and for $x=y$.  We now prove our 
theorem by induction on $x$:  that is, if the theorem holds for $x$, then it 
holds for $xs$ and therefore it holds in general.

Formula a):  if by the induction hypothesis
$$P^{\lambda,w_\lambda^0}_{w_\lambda^0 x, w_\lambda^0 y} (q) = (-1)^{\epsilon( 
w_\lambda^0 x \lambda - w_\lambda^0 y \lambda)} P_{w_\lambda^0x, w_\lambda^0 y}
(-q)$$
and $ys < y$ and $xs > x \geq y$, then formula a) gives
$$P^{\lambda,w_\lambda^0}_{w_\lambda^0 xs, w_\lambda^0 y}(q) = (-1)^{\epsilon( 
w_\lambda^0 xs \lambda - w_\lambda^0 y \lambda)} P_{w_\lambda^0 xs, w_\lambda^0 
y}(-q)$$ 
since $xs \lambda - x \lambda = s_{x\alpha} x \lambda - x\lambda = (x\lambda,
x\alpha^\vee) x\alpha$.

Formula a'):  if by the induction hypothesis
$$P^{\lambda,w_\lambda^0}_{w_\lambda^0 x, w_\lambda^0 y} (q) = (-1)^{\epsilon( 
w_\lambda^0 x \lambda - w_\lambda^0 y \lambda)} P_{w_\lambda^0x, w_\lambda^0 y}
(-q)$$ 
and $sy > y$ and $sx > x \geq y$, then
$$P^{\lambda,w_\lambda^0}_{w_\lambda^0 sx, w_\lambda^0 y}(q) = (-1)^{\epsilon( 
w_\lambda^0 sx \lambda - w_\lambda^0 y \lambda)} P_{w_\lambda^0 sx, w_\lambda^0 
y}(-q)$$ 
since 
$s x\lambda - x\lambda = (x\lambda,\alpha^\vee)\alpha$.

Formula b):  Suppose $y > ys$ and $x < xs$.  If by the induction hypothesis:
\begin{itemize}
\item[] $P^{\lambda,w_\lambda^0}_{ w_\lambda^0 x, w_\lambda^0 y}(q) = 
(-1)^{\epsilon( w_\lambda^0 x- w_\lambda^0 y)}P_{w_\lambda^0 x, w_\lambda^0 y}
(-q)$,
\item[] $P^{\lambda,w_\lambda^0}_{w_\lambda^0 x, w_\lambda^0 z}(q) = (-1)^{
\epsilon( w_\lambda^0 x \lambda - w_\lambda^0 z \lambda)} P_{w_\lambda^0 x,
w_\lambda^0 z}(-q)$, 
\item[] $a^{\lambda,w_\lambda^0}_{w_\lambda^0 z, w_\lambda^0 y,1} = 
a_{w_\lambda^0 z, w_\lambda^0 y, 1} (-1)^{\epsilon( w_\lambda^0 z \lambda 
- w_\lambda^0 y\lambda)}
(-1)^{\frac{\ell(z)-\ell(y)-1}{2}}$, and 
\item[] $P^{\lambda,w_\lambda^0}_{w_\lambda^0 x, w_\lambda^0 ys}(q) = 
(-1)^{\epsilon( w_\lambda^0 x - w_\lambda^0 ys)} P_{w_\lambda^0 x, w_\lambda^0 
ys}(-q)$,
\end{itemize}
(recall that we can in fact apply induction to $z$ since $z \leq x$, or else
$P^{\lambda,w_\lambda^0}_{ w_\lambda^0 x, w_\lambda^0 z}(q) = 0)$,
formula b) may be rewritten:
\begin{eqnarray*}
(-1)^{\epsilon(w_\lambda^0(xs \lambda - x \lambda))}&& P^{\lambda,
w_\lambda^0}_{w_\lambda^0 xs,
w_\lambda^0 y}(q) = q (-1)^{\epsilon( w_\lambda^0( x \lambda - y \lambda))} 
P_{w_\lambda^0 x, 
w_\lambda^0 y}(-q) \\
&& - q \sum_{z \in W_\lambda | z < zs} a_{w_\lambda^0 z,
w_\lambda^0 y,1} (-1)^{\epsilon(w_\lambda^0( z\lambda - y \lambda))} 
%(-1)^{\frac{\ell{z} - \ell(y)-1}{2}} 
(-q)^{\frac{\ell(z)-\ell(y)-1}{2}} (-1)^{\epsilon( w_\lambda^0( x \lambda
- z \lambda))} P_{w_\lambda^0 x, w_\lambda^0 z}(-q) \\
&&+(-1)^{\epsilon(w_\lambda^0( y \lambda - ys \lambda))}(-1)^{\epsilon( 
w_\lambda^0 (x \lambda - ys \lambda) )} P_{w_\lambda^0 x, 
w_\lambda^0 ys}(-q).
\end{eqnarray*}
Rearranging, we obtain:
\begin{eqnarray*}
P^{\lambda,w_\lambda^0}_{w_\lambda^0 xs, w_\lambda^0 y}(q) = (-1)^{\epsilon(
w_\lambda^0(xs\lambda-y\lambda))}&&\big[q P_{w_\lambda^0 x, w_\lambda^0 y}
(-q)  )  \\
&& -q \sum_{z \in W_\lambda | z < zs} a_{w_\lambda^0 z, w_\lambda^0 y,1} 
(-q)^{\frac{\ell(z)-\ell(y)-1}{2}} P_{w_\lambda^0, w_\lambda^0 z}(-q) \\
&& + P_{w_\lambda^0 x, w_\lambda^0 ys}(-q) \big].
\end{eqnarray*}
Observe that up to multiplication by $(-1)^{\epsilon(w_\lambda^0(xs\lambda-
y\lambda))}$,
the right hand side is simply the formula for $P_{w_\lambda^0 xs,
w_\lambda^0 y}(q)$ arising from classical formula b) with $-q$ in place of $q$, 
whence 
$$ P^{\lambda,w_\lambda^0}_{w_\lambda^0 xs, w_\lambda^0 y}(q) = 
(-1)^{\epsilon(w_\lambda^0( xs \lambda - y \lambda ))} P_{w_\lambda^0 xs, 
w_\lambda^0 y} (-q)$$
and the theorem holds in general by induction.
\end{proof}

\section{Conclusion}
Although the classification of unitary highest weight modules has been 
solved by work of Enright-Howe-Wallach, it would be interesting to recover the 
classification using signed Kazhdan-Lusztig polynomials and the formulas in 
this paper and in \cite{Y2}.  Cohomological induction applied to highest 
weight modules produces Harish-Chandra modules for which signatures can be 
recorded using signed Kazhdan-Lusztig-Vogan polynomials.  Techniques used to 
identify unitary representations among highest weight modules may very well 
have analogues for Harish-Chandra modules.

It would also be interesting to investigate signed Kazhdan-Lusztig polynomials 
for the non-equal rank case.

\nocite{YC}
\nocite{YE}
\bibliographystyle{alpha}
\bibliography{SimpleSKL}
\end{document}